\documentclass[12pt]{amsart}
\usepackage{amssymb}
\usepackage{graphicx}
\usepackage{t1enc}
\usepackage[latin2]{inputenc}
\usepackage{verbatim}
\usepackage{amsmath,amsfonts,amssymb,amsthm}
\usepackage[mathcal]{eucal}
\usepackage{enumerate}

\setlength{\oddsidemargin}{-2truemm}
\setlength{\evensidemargin}{-2truemm}
\setlength{\topmargin}{-1.6truecm} \setlength{\textheight}{23.5cm}
\textwidth16.5cm
\parskip 1.5mm
\parindent 3mm
\setlength{\abovedisplayskip}{10pt plus 2.5pt minus 7.5pt}
\setlength{\belowdisplayskip}{10pt plus 2.5pt minus 7.5pt}
\newtheorem{theorem}{Theorem}[section]

\newtheorem{lemma}[theorem]{Lemma}

\numberwithin{equation}{section}

\begin{document}
\author[K. Nagy]{K\'aroly Nagy}
\title[The existence and unicity]{The existence and unicity of numerical solution of initial value problems by Walsh polynomials approach}
\thanks{Research supported by project GINOP-2.2.1-15-2017-00055.}

\address{K. Nagy, Institute of Mathematics and Computer Sciences, University of Ny\'\i
regyh\'aza, P.O. Box 166, Ny\'\i regyh\'aza, H-4400 Hungary }
\email{nagy.karoly@nye.hu}

\date{}

\begin{abstract} 
Chen and  Hsiao gave the numerical solution of initial value
problems of systems of linear differential equations with constant coefficients by Walsh polynomials approach. This result was improved by G\'at and Toledo for initial value problems of differential equations with   variable coefficients on the interval
$[0,1[$ and initial value $\xi=0$. In the present paper we discuss the general case while $\xi$ can take any arbitrary value in the interval $[0,1[$. We show the existence and uniform convergence of the numerical solution, as well. 
\end{abstract}
\maketitle
\noindent
\textbf{Key words and phrases:}  Numerical solution of differential equations, initial value problems, Walsh polynomials, modulus of continuity.
\par\noindent
\textbf{2010 Mathematics Subject Classification.} 42C10, 65L05.

\section{Introduction}
In 1973  Corrington developed
a method to solve $n$th order linear differential equations \cite{Corr}, he used huge tables of the Walsh-Fourier coefficients of certain 
integrals of Walsh functions.
Two years later   Chen and  Hsiao created a new
procedure for the numerical solution of initial value
problems of systems of linear differential equations with constant coefficients by Walsh polynomials approach  \cite{CH1}, they improved the method of Corrington. 
In this period Chen and Hsiao wrote several papers in which they showed the applicability 
of their procedure in different fields of sciences \cite{CH2, CH3, CH4}. Applying this method several papers were born \cite{CH-V, SH, O, R}.
However, the authors 
did not deal with the analysis of the proposed numerical solution. 

Recently, the method of Chen and Hsiao was analysed by G\'at and Toledo \cite{Gat-Toledo, GT2} using the tools of the theory of dyadic harmonic analysis \cite{SchippBook}. They investigated the solvability of the linear system appearing during the procedure of Chen and Hsiao and the  estimation  of errors.  
In  paper \cite{GT} the authors extended the results in \cite{Gat-Toledo} to develop a similar method for
solving initial value problems of differential equations with not necessarily constant coefficients. The existence and unicity of the numerical solution were discussed. Moreover,  estimation of errors was given. Namely, the Cauchy problem 
\[
\begin{split}
	y'+p(x)y &=q(x),\\
	y(0)&=\eta,
\end{split}
\]
was treated with some assumption on functions $p(x),q(x)$. Some useful computations were improved in \cite{SM}. It is important to note that 
not only the Walsh polynomials are applied for numerical solution of differential equations with initial value condition. Several papers were written for other orthonormal systems, mainly for Haar system e.g. \cite{LLT,L}, as well. The biggest difference between the Walsh and Haar system is that, while the Walsh system is bounded and takes only two values +1 and -1, the Haar system is unbounded and could take very big values, as well.  Handling the Walsh system is more effective and its application eliminates the errors of calculations in sense of programming. 

In some problem it is impossible to establish an initial value at point $x=0$. For example  we consider the differential equation 
$$
y'+x^2 y=1-\frac{2}{x^3}.
$$
The general solution is $y=\frac{1}{x^2}+Ce^{-\frac{x^3}{3}}$. But $y(x)$ and $ q(x)$ is not determined at point $x=0$. So, we can not start the solution from such a type point, where the abscissa is 0. So, it seems to be natural to choose another starting point. 
For example, we could discuss the Cauchy problem
\begin{equation}\label{example}
\begin{aligned}
y'+x^2 y&=1-\frac{2}{x^3}\\
y\left(\frac{1}{2}\right)&=4.
\end{aligned}
\end{equation}
Its exact solution is $y(x)=\frac{1}{x^2}$.  
We could choose initial value  in a general form 
$
y(\xi)=\eta$, where $\xi\in [0,1[$.
Motivating by the previous problem \eqref{example} we deal with the Cauchy problem 
\begin{equation}\label{Cauchy}
\begin{split}
y'+p(x)y &=q(x),\\
y(\xi)&=\eta,
\end{split}
\end{equation}
where $p,q\colon [0,1[ \to {\mathbb R}$ are continuous functions with
$$
\int_0^1 |p(x)|dx<\infty, \quad \int_0^1 |q(x)| dx<\infty
$$
and $\xi\in [0,1[$.

The equivalent integral equation is
\begin{equation}\label{integral-eq}
{y}(x) = \eta +\int_{\xi}^{x} q(t)-p(t){y}(t) dt ,\quad x\in [0,1[.
\end{equation}
The connected discretized integral equation is given in the form 
\begin{equation}\label{discrete}
\overline{y}_n(x) = \eta +S_{2^n}\left(\int_{\xi}^{.} S_{2^n}q(t)-S_{2^n}p(t)\overline{y}_n(t) dt\right) (x),\quad x\in [0,1[,
\end{equation}
where $\overline{y}_n$ denotes a Walsh polynome of the form $\overline{y}_n=\sum_{k=0}^{2^n-1}c_k\omega_k$. Our aims are to determine the Walsh polynome $\overline{y}_n$ by a very fast numerical algorithm (so called multistep method) and after this to show the unicity of this solution. Moreover, we estimate the error of the numerical solution. At last, we present an example to illustrate the effectiveness of our multistep method.    In our main theorem we investigate the uniform convergence of the numerical solution on the interval $[0,1[ $.

\section{Definitions and notations}

Every $n\in\mathbf{N}$ can be uniquely expressed in the number system based 2 by 
\[
n=\sum_{k=0}^{\infty}n_k2^k,
\]
where $n_k\in \{ 0,1\}$ for all $k\in\mathbb{N}$. The sequence $(n_0,n_1,\ldots )$  called the dyadic expansion of $n$. Analogously, the dyadic expansion $(x_0,x_1,\dots)$ of a real number $x\in[0,1[$ is determined by the sum
\begin{equation*}
x=\sum_{k=0}^{\infty}\frac{x_k}{2^{k+1}},
\end{equation*}
where $x_k\in\{ 0,1\}$ for all $k\in\mathbb{N}$. This expansion is not unique if $x$ is a dyadic rational, i.e. $x$ is a number of the form $\frac{i}{2^{k}}$, where $i,k\in\mathbb N$ and $0\le i<2^k$. For dyadic rationals we choose the expansion terminates in zeros. Define the dyadic sum of two numbers $x,y\in[0,1[$ with expansion $(x_0,x_1,\dots)$ and $(y_0,y_1,\ldots)$, respectively by
\begin{equation*}%\label{EqDyadicSumX}
x\dotplus y:=\sum_{k=0}^{\infty}|x_k-y_k|2^{-(k+1)}.
\end{equation*}
The Rademacher functions are defined by 
\begin{equation*}
r_k(x):=(-1)^{x_k}\quad(x\in[0,1[,\ k\in\mathbb N).
\end{equation*}
The Walsh system in the Paley enumeration is defined as the product system of Rademacher functions
\begin{equation*}
\omega_n(x):=\prod_{k=0}^{\infty}r_k^{n_k}(x)\qquad(x\in[0,1[, n\in\mathbb N).
\end{equation*}
It is known that the Walsh-Paley system is complete orthonormal system in $L^2([0,1[)$ \cite{SchippBook}.
For an integrable function $f\in L^1([0,1[)$, 
the Fourier coefficients and partial sums of Fourier series are defined by
\begin{align*}
\widehat f_k&:=\int_{0}^{1}f(x)\omega_k(x)\,dx\quad(k\in\mathbb N),\\
S_nf(x)&:=\sum_{k=0}^{n-1}\widehat f_k\omega_k(x)\quad(n\in\mathbb N, x\in[0,1[).
\end{align*}

The  $n$th Dirichlet kernel is defined by 
\[
D_{n}(x):=\sum_{k=0}^{n-1}\omega_k(x)\quad(x\in[0,1[)
\]
The $2^n$th Dirichlet kernel has the following well known property (see \cite{SchippBook})
\begin{equation}\label{EqPaleyLemma}
D_{2^n}(x)=
\begin{cases}
2^n, &0\le x<\frac{1}{2^n},\\
0, &\frac{1}{2^n}\le x<1.
\end{cases}
\end{equation}
This yields that 
the $2^{n}$-th partial sums can be written in the form 
\[
S_{2^n}f(x)=2^n\int_{I_n(x)}f(y)\,dy
\]
where the sets 
\begin{equation*}\label{EqDyadicIntervals}
I_{n}(i):=\left[\frac{i-1}{2^{n}},\frac{i}{2^{n}}\right[\quad (i=1,\dots,2^{n})
\end{equation*}
are called dyadic intervals,  and $I_n(x)$ denotes the dyadic interval which contains  $x$ ($x\in [0,1[$).

It is important to note that 
 $S_{2^n}f$ converges to $f$ in $L^1$-norm for every integrable function $f$ (see \cite{SchippBook} p. 142).

The matrix $A$ of size $2^{n}$ is called the dyadic circulant matrix generated by the numbers $a_0,a_1,\ldots,a_{2^{n}-1}$ if for all of the entries of the matrix $A$
\[
a_{i,j}=a_{i\oplus j}\quad(i,j=0,1,\ldots,2^{n}-1)
\]
holds, where $a_{i,j}$ is  in the $i$-th row and $j$-th column of $A$, and $i\oplus j$ denotes the dyadic sum of the non-negative integers $i$ and $j$.
Let us define the function
\[
a(x)=\sum_{j=0}^{2^{n}-1}a_j\omega_j(x)\quad(x\in[0,1[).
\]
In paper \cite[Lemma 2]{GT} it is proved that the dyadic circulant matrix $A$  can be written as
\begin{equation}\label{EqDiagonalization}
A=WD_{a}W^{-1},
\end{equation}
where the matrix
\[
D_a=
\begin{pmatrix}
a(0)&0&0&\dots&0\\
0&a(\frac{1}{2^n})&0&\dots&0\\
0&0&a(\frac{2}{2^n})&\dots&0\\
\vdots&\vdots&\vdots&\ddots&\vdots\\
0&0&0&\dots&a(\frac{2^n-1}{2^n})
\end{pmatrix}
\]
is diagonal and the matrix
\[
W=
\begin{pmatrix}
\omega_0(0)&\omega_1(0)&\omega_1(0)&\dots&\omega_{2^n-1}(0)\\[2pt]
\omega_0(\frac{1}{2^n})&\omega_1(\frac{1}{2^n})&\omega_1(\frac{1}{2^n})&\dots&\omega_{2^n-1}(\frac{1}{2^n})\\[2pt]
\omega_0(\frac{2}{2^n})&\omega_1(\frac{2}{2^n})&\omega_1(\frac{2}{2^n})&\dots&\omega_{2^n-1}(\frac{2}{2^n})\\
\vdots&\vdots&\vdots&&\vdots\\
\omega_0(\frac{2^n-1}{2^n})&\omega_1(\frac{2^n-1}{2^n})&\omega_1(\frac{2^n-1}{2^n})&\dots&\omega_{2^n-1}(\frac{2^n-1}{2^n})\\
\end{pmatrix}
\]
is the Hadamard matrix of size $2^n\times 2^n$ derived from the Walsh-Paley system (see \cite{SchippBook}, as well).
It is natural to say that the dyadic circulant matrix $A$ is generated by the Walsh polynome $a(x)$.

Triangular functions $J_k^\xi$ are the integral function of the Walsh-Paley functions $\omega_k$. That is, 
\[
J_k^\xi(x):=\int_{\xi}^{x}\omega_k(t)\,dt\quad(k\in\mathbb N,0\le x<1).
\]
Let  $\widehat{J^\xi}_{k,j}$ be the $j$th Walsh-Fourier coefficient of the triangular function $J_k^\xi$. We can find the exact calculation of the values of $\widehat{J^0}_{k,j}$
in \cite{Gat-Toledo} directly by the Fine's formulae (see \cite{Fine}).
Let $\widehat{J^\xi}^{(n)}$ be the matrices whose entries are $\widehat{J^\xi}_{k,j}$, where $k,j=0,1,\dots,2^n-1$. Simply we write $\widehat{J^\xi}$.
We note that  
$$
J_k^\xi(x)=\int_{0}^{x}\omega_k(t)dt-\int_{0}^{\xi}\omega_k(t)dt
=J_k^0(x)-J_k^0(\xi)
$$
for all $0\leq \xi<x<1$. 

At last we note that, in this paper we follow the notation of paper  G\'at and Toledo \cite{GT}.

\section{Multistep algorithm based on the integral equation}
In this section, we consider the Walsh polynomials
\begin{equation}\label{polinom}
\overline{y}_n(x)=\sum_{k=0}^{2^n-1}c_k\omega_k(x)
\end{equation}
satisfying the discretized integral equation \eqref{discrete}.

In order to simplify our notations we denote by 
$\tilde{q}_n:=S_{2^n}q$ and $\tilde{p}_n:=S_{2^n}p$.
Since, the functions $\overline{y}_n(x),\ S_{2^n}q(x),\ S_{2^n}p(x)$ 
are constant on the dyadic intervals $I_n(i)=[\frac{i-1}{2^n},\frac{i}{2^n}[$ for all $i=1,2,\ldots, 2^n$, we write 
\begin{equation*}
\tilde{q}_n(x)-\tilde{p}_n(x)\overline{y}_n(x)=
\sum_{k=1}^{2^n} \left( \tilde{q}_n(\frac{k-1}{2^n})-\tilde{p}_n(\frac{k-1}{2^n})\overline{y}_n(\frac{k-1}{2^n})\right)\chi_{I_n(k)}(x).
\end{equation*}
Then  the discretized integral equation \eqref{discrete} could be written in the form
\begin{eqnarray}
\overline{y}_n(x) &=& \eta +S_{2^n}\left(\int_{\xi}^{.}
\sum_{k=1}^{2^n} \left( \tilde{q}_n(\frac{k-1}{2^n})-\tilde{p}_n(\frac{k-1}{2^n})\overline{y}_n(\frac{k-1}{2^n})\right)\chi_{I_n(k)}(t)dt
\right) (x)\nonumber\\
&=&\eta +\sum_{k=1}^{2^n} \left( \tilde{q}_n(\frac{k-1}{2^n})-\tilde{p}_n(\frac{k-1}{2^n})\overline{y}_n(\frac{k-1}{2^n})\right)
S_{2^n}\left(\int_{\xi}^{.}
\chi_{I_n(k)}(t)dt \right) (x).\label{constant-1}
\end{eqnarray}

Now, we calculate the functions $f(s):=\int_{\xi }^{s} \chi_{I_n(k)}(t)dt$, $S_{2^n}\left(\int_{\xi}^{.}
\chi_{I_n(k)}(t)dt \right) (x)$.
We have three cases with respect to the value of $\xi$.

First, we set $0\leq\xi<(k-1)/2^n$. Then we get
\begin{equation}\label{alatt}
f_n(s)=
\begin{cases}
0, & 0\leq s<\frac{k-1}{2^n},\\
s-\frac{k-1}{2^n}, & \frac{k-1}{2^n}\leq s< \frac{k}{2^n},\\
\frac{1}{2^n},& \frac{k}{2^n}\leq s<1,
\end{cases}
\quad 
S_{2^n}(f_n)(x)=\begin{cases}
0,& 0\leq x<\frac{k-1}{2^n}, \\
\frac{1}{2^{n+1}}, &  \frac{k-1}{2^n}\leq x< \frac{k}{2^n},\\
\frac{1}{2^n},& \frac{k}{2^n}\leq x<1.
\end{cases}
\end{equation}
Second, we set $(k-1)/2^n\leq \xi <k/2^n$. Then 
\begin{equation}\label{között}
f_n(s)=
\begin{cases}
\frac{k-1}{2^n}-\xi, & 0\leq s<\frac{k-1}{2^n},\\
s-\xi, & \frac{k-1}{2^n}\leq s< \frac{k}{2^n},\\
\frac{k}{2^n}-\xi,& \frac{k}{2^n}\leq s<1,
\end{cases}
\quad 
S_{2^n}(f_n)(x)=\begin{cases}
\frac{k-1}{2^n}-\xi,& 0\leq x<\frac{k-1}{2^n}, \\
\frac{2k-1}{2^{n+1}}-\xi, &  \frac{k-1}{2^n}\leq x< \frac{k}{2^n},\\
\frac{k}{2^n}-\xi,& \frac{k}{2^n}\leq x<1.
\end{cases}
\end{equation}
At last, we set $k/2^n \leq \xi $. We have
\begin{equation}\label{felett}
f_n(s)=
\begin{cases}
-\frac{1}{2^n} & 0\leq s<\frac{k-1}{2^n},\\
s-\frac{k}{2^n}, & \frac{k-1}{2^n}\leq s< \frac{k}{2^n},\\
0,& \frac{k}{2^n}\leq s<1,
\end{cases}
\quad 
S_{2^n}(f_n)(x)=\begin{cases}
-\frac{1}{2^n},& 0\leq x<\frac{k-1}{2^n}, \\
-\frac{1}{2^{n+1}}, &  \frac{k-1}{2^n}\leq x< \frac{k}{2^n},\\
0,& \frac{k}{2^n}\leq x<1.
\end{cases}
\end{equation}
There exists $k^* \in \{1,\ldots, 2^n\}$, such that $\xi \in I_n(k^*)$ ($k^*$  depends on $n$, that is $k^*=k^*(n)$). We divide the sum in equation \eqref{constant-1} into three parts as follows 
\begin{eqnarray}
\overline{y}_n(x) &=&\eta +\sum_{k=1}^{k^*-1} \left( \tilde{q}_n(\frac{k-1}{2^n})-\tilde{p}_n(\frac{k-1}{2^n})\overline{y}_n(\frac{k-1}{2^n})\right)
S_{2^n}\left(\int_{\xi}^{.}
\chi_{I_n(k)}(t)dt \right) (x)\nonumber\\
&&+ \left( \tilde{q}_n(\frac{k^*-1}{2^n})-\tilde{p}_n(\frac{k^*-1}{2^n})\overline{y}_n(\frac{k^*-1}{2^n})\right)
S_{2^n}\left(\int_{\xi}^{.}
\chi_{I_n(k^*)}(t)dt \right) (x)\label{constant-2}\\
&&+\sum_{k=k^*+1}^{2^n} \left( \tilde{q}_n(\frac{k-1}{2^n})-\tilde{p}_n(\frac{k-1}{2^n})\overline{y}_n(\frac{k-1}{2^n})\right)
S_{2^n}\left(\int_{\xi}^{.}
\chi_{I_n(k)}(t)dt \right) (x)\nonumber
\end{eqnarray}
Now, we set $x\in I_n(i)$. We have three cases determined by the relation between $i,k^*$.

\emph{Case I.} $k^*<i$ (that is, $\xi<x$ and $\xi,x$ lay in different dyadic intervals).
Since $\tilde{y}_n$ is constant on the interval $I_n(i)$, we may write 
$\tilde{y}_n(x)=\tilde{y}_n(\frac{i-1}{2^n})$. From equality \eqref{alatt}-\eqref{constant-2}, we immediately write 
\begin{eqnarray*}
\tilde{y}_n(\frac{i-1}{2^n})&=&\eta +
\sum_{k=1}^{k^*-1}\left( \ldots\right) 0
+\left( \tilde{q}_n(\frac{k^*-1}{2^n})-\tilde{p}_n(\frac{k^*-1}{2^n})\overline{y}_n(\frac{k^*-1}{2^n})\right)(\frac{k^*}{2^n}-\xi)\\
&&+\sum_{k=k^*+1}^{i-1} \left( \tilde{q}_n(\frac{k-1}{2^n})-\tilde{p}_n(\frac{k-1}{2^n})\overline{y}_n(\frac{k-1}{2^n})\right)\frac{1}{2^n}\\
&&+\left( \tilde{q}_n(\frac{i-1}{2^n})-\tilde{p}_n(\frac{i-1}{2^n})\overline{y}_n(\frac{i-1}{2^n})\right)\frac{1}{2^{n+1}}.
\end{eqnarray*}
This yields 
\begin{eqnarray*}
\tilde{y}_n(\frac{i-1}{2^n})=
\frac{1}{1+\frac{\tilde{p}_n(\frac{i-1}{2^n})}{2^{n+1}}}\left( 
\eta+\left( \tilde{q}_n(\frac{k^*-1}{2^n})-\tilde{p}_n(\frac{k^*-1}{2^n})\overline{y}_n(\frac{k^*-1}{2^n})\right)(\frac{k^*}{2^n}-\xi)\right.\\
\left. +\sum_{k=k^*+1}^{i-1} \left( \tilde{q}_n(\frac{k-1}{2^n})-\tilde{p}_n(\frac{k-1}{2^n})\overline{y}_n(\frac{k-1}{2^n})\right)\frac{1}{2^n}
+\frac{1}{2^{n+1}}\tilde{q}_n(\frac{i-1}{2^n})
\right).
\end{eqnarray*}
Thus, it is easy to obtain a recursive algorithm starting from the value $\overline{y}_n(\frac{k^*-1}{2^n})$, if it is known. See \emph{Case III}.

\emph{Case II.} $k^*>i$ (that is, $x<\xi$ and $\xi,x$ lay in different dyadic intervals).
Equality \eqref{alatt}-\eqref{constant-2} yield
\begin{eqnarray*}
\tilde{y}_n(\frac{i-1}{2^n})&=&\eta +
\sum_{k=1}^{i-1}\left( \ldots\right) 0
+\left( \tilde{q}_n(\frac{i-1}{2^n})-\tilde{p}_n(\frac{i-1}{2^n})\overline{y}_n(\frac{i-1}{2^n})\right)\frac{-1}{2^{n+1}}\\
&&+\sum_{k=i+1}^{k^*-1} \left( \tilde{q}_n(\frac{k-1}{2^n})-\tilde{p}_n(\frac{k-1}{2^n})\overline{y}_n(\frac{k-1}{2^n})\right)\frac{-1}{2^n}\\
&&+\left( \tilde{q}_n(\frac{k^*-1}{2^n})-\tilde{p}_n(\frac{k^*-1}{2^n})\overline{y}_n(\frac{k^*-1}{2^n})\right)(\frac{k^*-1}{2^{n}}-\xi)\\
&&+
\sum_{k=k^*+1}^n\left( \ldots \right) 0.
\end{eqnarray*}
By this we could express the value  $\tilde{y}_n(\frac{i-1}{2^n})$ in the form 
\begin{eqnarray*}
\tilde{y}_n(\frac{i-1}{2^n})=
\frac{1}{1-\frac{\tilde{p}_n(\frac{i-1}{2^n})}{2^{n+1}}}\left( 
\eta+\left( \tilde{q}_n(\frac{k^*-1}{2^n})-\tilde{p}_n(\frac{k^*-1}{2^n})\overline{y}_n(\frac{k^*-1}{2^n})\right)(\frac{k^*-1}{2^n}-\xi)\right.\\
\left. -\sum_{k=i+1}^{k^*-1} \left( \tilde{q}_n(\frac{k-1}{2^n})-\tilde{p}_n(\frac{k-1}{2^n})\overline{y}_n(\frac{k-1}{2^n})\right)\frac{1}{2^n}
-\frac{1}{2^{n+1}}\tilde{q}_n(\frac{i-1}{2^n})
\right).
\end{eqnarray*}
We obtain a recursive algorithm starting from the value $\overline{y}_n(\frac{k^*-1}{2^n})$ and $i$ goes down to 1, if $\overline{y}_n(\frac{k^*-1}{2^n})$ is known. See \emph{Case III}.

\emph{Case III.} $k^*=i$ (that is, $\xi,x$ belong to the same dyadic interval). We apply 
equality \eqref{alatt}-\eqref{constant-2}, again. 
\begin{eqnarray*}
	\tilde{y}_n(\frac{k^*-1}{2^n})&=&\eta +
\left( \tilde{q}_n(\frac{k^*-1}{2^n})-\tilde{p}_n(\frac{k^*-1}{2^n})\overline{y}_n(\frac{k^*-1}{2^n})\right)(\frac{2k^*-1}{2^{n+1}}-\xi).
\end{eqnarray*}
From this we could express the required value of $\tilde{y}_n(\frac{k^*-1}{2^n})$. That is,
$$
\tilde{y}_n(\frac{k^*-1}{2^n})=\frac{1}{1+\tilde{p}_n(\frac{k^*-1}{2^n})(\frac{2k^*-1}{2^{n+1}}-\xi) }
\left( \eta+ \tilde{q}_n(\frac{k^*-1}{2^n})
(\frac{2k^*-1}{2^{n+1}}-\xi)
\right).
$$
At last, we could state the following theorem.
\begin{theorem}\label{theorem-1}Let $p$ and $q$ be two integrable and continuous functions defined on the interval $[0,1[$. Then
there exists a $n^*\in {\mathbb N}$, such that the  discretized integral equations \eqref{discrete} for all $n\geq n^*$  with assumption of the original initial value problem \eqref{Cauchy} has got at least one solution.
\end{theorem}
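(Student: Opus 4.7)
The plan is to turn the existence question into the solvability of the linear system to which Cases I--III of Section~3 have already reduced the discretized equation. The only missing ingredient is a uniform estimate that forces every denominator appearing in the three recursive formulae to be nonzero as soon as $n$ is sufficiently large.

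First I would observe that a Walsh polynomial of order $2^n$ is uniquely determined by its $2^n$ values on the dyadic intervals $I_n(1),\dots,I_n(2^n)$: the map from the coefficient vector $(c_0,\dots,c_{2^n-1})$ to the vector of values $(\overline y_n(0),\dots,\overline y_n(\tfrac{2^n-1}{2^n}))$ is given by the (invertible) Hadamard matrix $W$. Hence constructing $\overline y_n$ is equivalent to determining the $2^n$ numbers $\overline y_n(\tfrac{i-1}{2^n})$ satisfying the equations of Cases I--III. The natural order to impose them is: use Case III to fix $\overline y_n(\tfrac{k^*-1}{2^n})$ in closed form, then unroll Case I forwards for $i=k^*+1,\dots,2^n$ and Case II backwards for $i=k^*-1,\dots,1$.

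Second, this recursion succeeds as long as the three kinds of denominators
\[
1+\tilde p_n(\tfrac{k^*-1}{2^n})\bigl(\tfrac{2k^*-1}{2^{n+1}}-\xi\bigr),\qquad 1+\tfrac{\tilde p_n(\frac{i-1}{2^n})}{2^{n+1}},\qquad 1-\tfrac{\tilde p_n(\frac{i-1}{2^n})}{2^{n+1}}
\]
are all nonzero. Using the identity $\tilde p_n(\tfrac{i-1}{2^n})=2^n\int_{I_n(i)}p(y)\,dy$ together with the fact that the midpoint $\tfrac{2k^*-1}{2^{n+1}}$ of $I_n(k^*)$ differs from $\xi\in I_n(k^*)$ by at most $2^{-(n+1)}$, I would derive the uniform bounds
\[
\Bigl|\tfrac{\tilde p_n(\frac{i-1}{2^n})}{2^{n+1}}\Bigr|\le \tfrac12\int_{I_n(i)}|p(y)|\,dy,\qquad \Bigl|\tilde p_n(\tfrac{k^*-1}{2^n})\bigl(\tfrac{2k^*-1}{2^{n+1}}-\xi\bigr)\Bigr|\le \tfrac12\int_{I_n(k^*)}|p(y)|\,dy.
\]
Then I would invoke the uniform continuity on the compact interval $[0,1]$ of the primitive $F(x):=\int_0^x|p(y)|\,dy$, which yields $\sup_{1\le i\le 2^n}\int_{I_n(i)}|p(y)|\,dy\to 0$ as $n\to\infty$. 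Choosing $n^*$ so large that this supremum is at most $1/2$ for every $n\ge n^*$ places all three denominators simultaneously into, say, $[3/4,5/4]$, and the recursion from Case III together with Cases I and II produces at least one Walsh polynomial $\overline y_n$ of order $2^n$ satisfying \eqref{discrete}.

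The main, and really only, technical point is this uniform-smallness estimate: $p$ is assumed only continuous on $[0,1[$ and integrable, so it need not be bounded, and a pointwise (in $x$) bound on $\tilde p_n$ is not strong enough because the recursion must succeed for every $i$ at the same $n$. Uniform continuity of $F$ on $[0,1]$ is exactly what brings all $2^n$ denominators into a common neighbourhood of $1$ at a single $n$, which is precisely what simultaneous solvability of the system requires.
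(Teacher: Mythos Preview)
Your proposal is correct and follows essentially the same route as the paper: reduce existence to the nonvanishing of the three recursive denominators, control each by $\tfrac{1}{2}\int_{I_n(i)}|p|$, and choose $n^*$ so that this is uniformly small, after which the Case~III/I/II recursion produces the values of $\overline y_n$. The only difference is that where the paper imports the limit $\max_i 2^{-n}|\tilde p_n(i/2^n)|\to 0$ from \cite{GT}, you supply a self-contained proof via uniform continuity of the primitive $F(x)=\int_0^x|p|$ on $[0,1]$; this is a clean way to avoid the citation and yields the same $n^*$ with denominators in $[3/4,5/4]$.
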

\begin{proof}
The proof is based on the multistep algorithm presented above. 
First, we have to find a natural number $n^*$, such that the expressions $\tilde{y}_{n^*}(\frac{i-1}{2^{n^*}})$ could be calculated for all $i=1,\ldots, 2^{n^*}$. 
In paper \cite{GT} the next result is proved under the assumption that $p$ is continuous  and integrable on the interval $[0,1[$. 
\begin{equation}\label{main-eq}
\lim_{n\to \infty }\max_{0\leq i<2^{n}}\left\{ \frac{1}{2^{n}}\left|\tilde{p}_{n}(\frac{i}{2^{n}})\right| \right\}
=0.
\end{equation}
Applying this statement we could choose a natural number $n^*$, such that 
\begin{equation}\label{main-eq-2}
\max_{0\leq i<2^{n}}\left\{ \frac{1}{2^{n}}\left|\tilde{p}_{n}(\frac{i}{2^{n}})\right| \right\} <\frac{1}{2}
\end{equation}
holds for all $n\geq n^*$. Let us set $n\geq n^*$.
That is, 
\begin{equation}\label{main-eq-3}
1\pm \frac{\tilde{p}_n(\frac{i-1}{2^n})}{2^{n+1}}>3/4
\end{equation}
for all $i=1,\ldots, 2^n$. 
Now, we give $k^*$ in that way $\xi\in I_{n}(k^*)$ (that is $k^*=k^*(n)$).
Since, $\frac{2k^*-1}{2^{n+1}}$ is the middle point of the dyadic interval $I_{n}(k^*)$, we
have
$$
1+\tilde{p}_{n}(\frac{k^*-1}{2^{n}})(\frac{2k^*-1}{2^{{n}+1}}-\xi) >3/4
.$$
That is, $\tilde{y}_{n}(\frac{i-1}{2^{n}})$ are well defined for all $i=1,\ldots, 2^{n}$.

First, we determine $\tilde{y}_{n}(\frac{k^*-1}{2^{n}})$, by the formula given in \emph{Case III}.
After this, by the recursive formula in \emph{Case I} we start from $i=k^*$ up to $i=2^{n}$. 
At last, by the recursive formula in \emph{Case II} we calculate from $i=k^*$ down to $i=1$. 
\end{proof}
\section{Unicity of  solution of  discretized integral equation \eqref{discrete} }

In the previous section, we considered the Walsh polynomials 
$
\overline{y}_n(x)=\sum_{k=0}^{2^n-1}c_k\omega_k(x)
$
satisfying the discretized integral equation.
In this section our aim is to find the coefficients of the Walsh polynomial $\overline{y}_n$ for a fixed natural number $n\geq n^*$ ($n^*$ is determined in Theorem \ref{theorem-1}) and we show the unicity of this solution. 
We introduce the following vectors and matrices:
\begin{align*}
\mathbf{c}^\top&:=(c_0,c_1,\dots,c_{2^n-1}),\\
\mathbf{\widehat{q}}^\top&:=(\widehat{q}_0,\widehat{q}_1,\dots,\widehat{q}_{2^n-1}),\\
\mathbf{\widehat{p}}^\top&:=(\widehat{p}_0,\widehat{p}_1,\dots,\widehat{p}_{2^n-1}),\\
 \boldsymbol\omega(x)^\top&:=(\omega_0(x),\omega_1(x),\dots,\omega_{2^n-1}(x)),\\ \mathbf{e_0}^\top&:=(1,0,\dots,0)\textrm { with size }2^n.
\end{align*}
\begin{align*}
\widehat{J^\xi}&:=(\widehat{J^\xi}_{k,j})_{k,j=0}^{2^n-1},\quad
P:=(\widehat{p}_{i\oplus j})_{i,j=0}^{2^n-1},
\end{align*}
where $P$ is the dyadic circulant matrix generated by $S_{2^n}p(x)$.

The discretized integral equation $\eqref{discrete}$ can be written by the help of matrix notations as follows
\begin{equation}\label{EqMatrix}
\boldsymbol\omega(x)^\top \mathbf{c}
=\eta+S_{2^n}\left(\int_{\xi}^{.}\boldsymbol\omega(t)^\top\mathbf{\widehat{q}}-\boldsymbol\omega(t)^\top\mathbf{\widehat{p}}\boldsymbol\omega(t)^\top \mathbf{c}\,dt\right)(x)
\end{equation}
In paper \cite{GT} it is proved that 
$$
\boldsymbol\omega(t)^\top\mathbf{\widehat{p}}\boldsymbol\omega(t)^\top \mathbf{c}
=\boldsymbol\omega(t)^\top P \mathbf{c}.
$$
Using this we write equation \eqref{EqMatrix} in the next form

\begin{equation}\label{EqMatrix2}
\begin{aligned}
\boldsymbol\omega(x)^\top \mathbf{c}
&=\eta+S_{2^n}\left(\int_{\xi}^{.}\boldsymbol\omega(t)^\top\mathbf{\widehat{q}}-\boldsymbol\omega(t)^\top P \mathbf{c}\,dt\right)(x)\\
&=\boldsymbol\omega(x)^\top\eta\mathbf{e_0}+S_{2^n}\left(\int_{\xi}^{.}\boldsymbol\omega(t)^\top\,dt\right)(x)\cdot(\mathbf{\widehat{q}}-P\mathbf{c})\\
&=\boldsymbol\omega(x)^\top\eta\mathbf{e_0}+\boldsymbol\omega(x)^\top\widehat{J^\xi}^\top(\mathbf{\widehat{q}}-P\mathbf{c})\\
&=\boldsymbol\omega(x)^\top(\eta\mathbf{e_0}+\widehat{J^\xi}^\top(\mathbf{\widehat{q}}-P\mathbf{c})).  
\end{aligned}
\end{equation}
at every point of $[0,1[$.
Equation \eqref{EqMatrix2} also holds for the coefficients of Walsh polynomials. That is, we obtained the linear equation system
\begin{equation*}\label{EqLinerar1}
\mathbf{c}=\eta\mathbf{e_0}+\widehat{J^\xi}^\top(\mathbf{\widehat{q}}-P\mathbf{c})
\end{equation*}
containing the variables $c_0,c_1,\ldots,c_{2^n-1}$. In matrix form
\begin{equation}\label{EqLinear3}
({I}+\widehat{J^\xi }^\top P)\mathbf{c}
=\eta\mathbf{e_0}+\widehat{J^\xi}^\top\mathbf{\widehat{q}},
\end{equation}
where ${I}$ is the identity matrix of size $2^{n}\times 2^n$.
The unicity of solution $\overline{y}_n$ of discretized integral equation \eqref{discrete} depend on the  value  $\det({I}+\widehat{J^\xi }^\top P)$.

First, we prove the next Lemma 
\begin{lemma}\label{Lemma-Matrix-J}
	For all positive integer $n$ we have
$$	
W^{-1}\widehat{J^\xi}^{(n)\top}W=
	\begin{pmatrix}
-\frac{1}{2^{n+1}}& -\frac{1}{2^n}&-\frac{1}{2^n}&\dots&-\frac{1}{2^n}&     \frac{k^*-1}{2^n}-\xi&0&0&\dots&0 \\
0&-\frac{1}{2^{n+1}}& -\frac{1}{2^n}&\dots&-\frac{1}{2^n}&     \vdots&0&0&\dots&0 \\
\vdots&\ddots & \ddots& \ddots &\vdots &\vdots &\vdots &\vdots &\vdots &\vdots \\
0&\dots&0&-\frac{1}{2^{n+1}}&-\frac{1}{2^n}&     \frac{k^*-1}{2^n}-\xi&0&0&\dots&0 \\
0&\dots&\dots&0&-\frac{1}{2^{n+1}}&     \frac{k^*-1}{2^n}-\xi&0&0&\dots&0 \\
0&\dots&\dots&\dots&0&    \frac{2k^*-1}{2^{n+1}}-\xi &0&0&\dots &0\\	
0&\dots&\dots&\dots&0&      \frac{k^*}{2^n}-\xi&	\frac{1}{2^{n+1}}&0&\dots&0\\
0&\dots&\dots&\dots&0&      \frac{k^*}{2^n}-\xi&	\frac{1}{2^{n}}&\frac{1}{2^{n+1}}&\ddots&\vdots\\
\vdots&\dots&\dots&\dots&\vdots&  \vdots&	\vdots&\ddots&\ddots&0\\
0&0&0&\dots&0&      \frac{k^*}{2^n}-\xi&	\frac{1}{2^{n}}&\dots&\frac{1}{2^{n}}&\frac{1}{2^{n+1}}
	\end{pmatrix}.
$$
\end{lemma}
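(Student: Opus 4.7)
The plan is to compute the $(r,s)$-entry of $M:=W^{-1}\widehat{J^\xi}^{(n)\top}W$ in closed form and match it against the matrix on the right-hand side. The key observation is that the symmetry $\omega_s(j/2^n)=\omega_j(s/2^n)$ of the Walsh--Paley system at dyadic points gives $W=W^\top$, and together with the orthogonality relation $WW^\top=2^n I$ this yields $W^{-1}=\tfrac{1}{2^n}W$. I would therefore expand
\[
M_{r,s}=\frac{1}{2^n}\sum_{i,j=0}^{2^n-1}\omega_i(r/2^n)\,\widehat{J^\xi}_{j,i}\,\omega_s(j/2^n),
\]
carry out the $i$-sum first, and recognise $\sum_{i=0}^{2^n-1}\widehat{J^\xi}_{j,i}\,\omega_i(r/2^n)=S_{2^n}J_j^\xi(r/2^n)$, which by \eqref{EqPaleyLemma} equals $2^n\int_{I_n(r+1)}J_j^\xi(u)\,du$.

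The critical step is then to interchange the surviving $j$-sum with the two integrals hidden in $J_j^\xi(u)=\int_\xi^u\omega_j(t)\,dt$. After swapping, the inner $j$-sum becomes $\sum_{j=0}^{2^n-1}\omega_j(s/2^n)\,\omega_j(t)=D_{2^n}(t\dotplus s/2^n)$, which by \eqref{EqPaleyLemma} equals $2^n\chi_{I_n(s+1)}(t)$. Substituting back and applying $S_{2^n}(g)(r/2^n)=2^n\int_{I_n(r+1)}g$ once more, I obtain the clean identity
\[
M_{r,s}=S_{2^n}\!\left(\int_\xi^{\,\cdot}\chi_{I_n(s+1)}(t)\,dt\right)\!(r/2^n),
\]
so that the entries of $M$ are precisely the discretized antiderivatives $S_{2^n}(f_n)(x)$ already computed in the multistep algorithm, under the correspondence $k=s+1$, $x=r/2^n$.

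With this reduction, the lemma becomes bookkeeping. I would split the columns of $M$ into three zones according to the position of $k=s+1$ relative to $k^*$: columns with $s\le k^*-2$ (the case $k/2^n\le\xi$) are handled by \eqref{felett}; the single column $s=k^*-1$ (the case $\xi\in I_n(k^*)$) is handled by the middle-case formula that appears between \eqref{alatt} and \eqref{felett}; and columns with $s\ge k^*$ (the case $\xi<(k-1)/2^n$) are handled by \eqref{alatt}. In each zone the three piecewise values of $S_{2^n}(f_n)$ evaluated at $r/2^n$ directly supply the three possible entries of the $s$-th column of $M$---corresponding to $r<s$, $r=s$ and $r>s$ (or to $r<k^*-1$, $r=k^*-1$, $r>k^*-1$ in the central column)---and these match term-by-term the three zones of the target matrix. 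The main obstacle is really the index bookkeeping: the shift $k=s+1$, the identification of the ``middle'' row with index $k^*-1$, and the careful use of $W^{-1}=\tfrac{1}{2^n}W$ in the change-of-basis step. Once those are pinned down, the verification is completely mechanical.
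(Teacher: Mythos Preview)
Your argument is correct, and the reduction step---expanding the matrix product, using $W^{-1}=\tfrac{1}{2^n}W$, and collapsing the double sum via the Dirichlet kernel---is exactly the route the paper takes, arriving at the same double-integral representation of each entry. The only substantive difference is in the evaluation of that integral: the paper carries out a fresh three-way case analysis on the position of $k^*$ relative to the row index, splitting the inner integral against the support of $D_{2^n}(t\dotplus\frac{j}{2^n})$ and computing each piece directly, whereas you observe that the entry is literally $S_{2^n}\big(\int_\xi^{\cdot}\chi_{I_n(s+1)}\big)(r/2^n)$ and therefore already tabulated in \eqref{alatt}--\eqref{felett}. Your route is shorter and makes the connection between Lemma~\ref{Lemma-Matrix-J} and the multistep formulas explicit; the paper's route is more self-contained in that it does not appeal back to Section~3. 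Either way the bookkeeping you flag (the shift $k=s+1$, the identification of the central row and column with index $k^*-1$) is the only place to be careful, and you have it right.
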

\begin{proof}
During this proof we  use the starting idea of Lemma 4 presented in paper \cite{GT}. 
We compute directly the entry $a_{ij}^\xi$ of the matrix $W^{-1}\widehat{J^\xi}^{\top}W$.

Using that $W$ is a symmetric matrix such that $W^{-1}=\frac{1}{2^n}W$ holds and applying equation \eqref{EqPaleyLemma},  we write
\begin{align*}
a_{ij}^\xi
&=\frac{1}{2^n}\sum_{k=0}^{2^n-1}\sum_{l=0}^{2^n-1}\omega_k(\frac{i}{2^n})\widehat{J^\xi}_{l,k}\omega_l(\frac{j}{2^n})\\
&=\frac{1}{2^n}\sum_{k=0}^{2^n-1}\sum_{l=0}^{2^n-1}\omega_k(\frac{i}{2^n})\int_{0}^{1}\int_{\xi}^{x}\omega_l(t)\,dt\,\omega_k(x)\,dx\,\omega_l(\frac{j}{2^n})\\
&=\frac{1}{2^n}\sum_{k=0}^{2^n-1}\sum_{l=0}^{2^n-1}\int_{0}^{1}\int_{\xi}^{x}\omega_l(t\dotplus\frac{j}{2^n})\,dt\,\omega_k(x\dotplus\frac{i}{2^n})\,dx
\\
&=\frac{1}{2^n}\int_{0}^{1}\int_{\xi}^{x}D_{2^n}(t\dotplus\frac{j}{2^n})\,dt\,D_{2^n}(x\dotplus\frac{i}{2^n})\,dx\\
&=\frac{1}{2^n}\int_{0}^{1}\int_{\xi}^{x\dotplus\frac{i}{2^n}}D_{2^n}(t\dotplus\frac{j}{2^n})\,dt\,D_{2^n}(x)\,dx\\
&=\int_{0}^{\frac{1}{2^n}}\int_{\xi}^{x\dotplus\frac{i}{2^n}}D_{2^n}(t\dotplus\frac{j}{2^n})\,dt\,dx\\
&=\int_{0}^{\frac{1}{2^n}}\int_{\xi}^{x+\frac{i}{2^n}}D_{2^n}(t\dotplus\frac{j}{2^n})\,dt\,dx.
\end{align*}
At last we used that $x\dotplus\frac{i}{2^n}=x+\frac{i}{2^n}$, while $0\leq x<\frac{1}{2^n}$. 

Now, we set $k^*$ as we did in Theorem \ref{theorem-1}.
We have three cases $k^*\leq i$, $k^*=i+1$, $k^*>i+1$, while  $\frac{i}{2^n}\leq x+\frac{i}{2^n}<\frac{i+1}{2^n}$ for $x\in [0,\frac{1}{2^n}[$.

\emph{Case I.} Let us set $k^*\leq i$.
\begin{align*}
a_{ij}^\xi
&=\int_{0}^{\frac{1}{2^n}}\int_{\xi}^{x+\frac{i}{2^n}}D_{2^n}(t\dotplus\frac{j}{2^n})\,dt\,dx\\
&=\int_{0}^{\frac{1}{2^n}}
\left(
\int_{\xi}^{\frac{k^*}{2^n}}D_{2^n}(t\dotplus\frac{j}{2^n})\,dt 
+\sum_{r=k^*}^{i-1}
\int_{\frac{r}{2^n}}^{\frac{r+1}{2^n}}D_{2^n}(t\dotplus\frac{j}{2^n})\,dt 
+
\int_{\frac{i}{2^n}}^{x+\frac{i}{2^n}}D_{2^n}(t\dotplus\frac{j}{2^n})\,dt
\right)dx\\
&=:J_1+J_2+J_3.
\end{align*}
First, we discuss the expression $J_1$.
\begin{align*}
J_1&=\int_{0}^{\frac{1}{2^n}}
\left(
\int_{\frac{k^*-1}{2^n}}^{\frac{k^*}{2^n}}D_{2^n}(t\dotplus\frac{j}{2^n})\,dt +
\int_{\frac{k^*-1}{2^n}}^{\xi}D_{2^n}(t\dotplus\frac{j}{2^n})\,dt 
\right) dx\\
&=\int_{0}^{\frac{1}{2^n}}\left(
\int_{\frac{k^*-1}{2^n}\dotplus\frac{j}{2^n}}^{\frac{k^*}{2^n}\dotplus\frac{j}{2^n}}D_{2^n}(t)\,dt+
\int_{\frac{k^*-1}{2^n}\dotplus\frac{j}{2^n}}^{\xi\dotplus\frac{j}{2^n}}D_{2^n}(t)\,dt
\right) dx\\
&=\begin{cases} 
\frac{1}{2^{n}}-(\xi - \frac{k^*-1}{2^n}),& \textrm{if } j=k^*-1,\\
0, & \textrm{otherwise}.
\end{cases}
\end{align*}

For the expression $J_2$ it is easily seen that
$$
J_2=\sum_{r=k^*}^{i-1}\int_{0}^{\frac{1}{2^n}}
\int_{\frac{r}{2^n}\dotplus \frac{j}{2^n}}^{(\frac{r}{2^n}\dotplus \frac{j}{2^n})+\frac{1}{2^n}}D_{2^n}(t)\,dt\, dx=
\begin{cases} 
\frac{1}{2^{n}},& \textrm{if } k^*\leq j<i,\\
0, & \textrm{otherwise}.
\end{cases}
$$

Analogously, it can be showed that 
$$
J_3:=\begin{cases} 
\frac{1}{2^{n+1}},& \textrm{if }i=j,\\
0, & \textrm{otherwise}.
\end{cases}
$$
Collecting our results we have that 
$$
a_{ij}^\xi=\begin{cases} 
\frac{1}{2^{n+1}},& \textrm{if }i=j,\\
\frac{1}{2^{n}},& \textrm{if } k^*\leq j<i,\\
\frac{k^*}{2^n}-\xi,& \textrm{if } j=k^*-1<i,\\
0, & \textrm{otherwise}.
\end{cases}
$$

\emph{Case II.} We set 
$k^*=i+1$. 
\begin{align*}
a_{ij}^\xi
&=\int_{0}^{\frac{1}{2^n}}\int_{\frac{i}{2^n}+(\xi-\frac{i}{2^n})}^{x+\frac{i}{2^n}}D_{2^n}(t\dotplus\frac{j}{2^n})\,dt\,dx\\
&=\int_{0}^{\frac{1}{2^n}}\int_{(\frac{i}{2^n}\dotplus\frac{j}{2^n})+(\xi-\frac{i}{2^n})}^{(\frac{i}{2^n}\dotplus\frac{j}{2^n})+x}D_{2^n}(t)\,dt\,dx
\end{align*}
(We note that $0\leq x<\frac{1}{2^n}$.) 
Equality \eqref{EqPaleyLemma} yields  $a_{ij}^\xi\neq 0$ if $i=j=k^*-1.$ 
In this case we get $\int_{(\frac{i}{2^n}\dotplus\frac{j}{2^n})+(\xi-\frac{i}{2^n})}^{(\frac{i}{2^n}\dotplus\frac{j}{2^n})+x}D_{2^n}(t)\,dt=2^n(x-(\xi -\frac{k^*-1}{2^n}))$
That is, 
$$
a_{ij}^\xi=\begin{cases} 
\frac{2k^*-1}{2^{n+1}}-\xi ,& \textrm{if } i=j=k^*-1,\\
0, & \textrm{otherwise }(i=k^*-1).
\end{cases}
$$
We note that $\frac{2k^*-1}{2^{n+1}}$ is the middle point of the interval $[\frac{k^*-1}{2^n}, \frac{k^*}{2^n}[$. 

\emph{Case III.} We set $k^*>i+1$. We have that 
$\frac{i}{2^n}\leq x+\frac{i}{2^n}<\frac{i+1}{2^n}\leq \frac{k^*-1}{2^n}\leq \xi<\frac{k^*}{2^n}$, while $0\leq x<\frac{1}{2^n}$.
 
\begin{align*}
a_{ij}^\xi
&=\int_{0}^{\frac{1}{2^n}}\int_{\xi}^{x+\frac{i}{2^n}}D_{2^n}(t\dotplus\frac{j}{2^n})\,dt\,dx\\
&=-\int_{0}^{\frac{1}{2^n}}\int_{x+\frac{i}{2^n}}^{\xi}D_{2^n}(t\dotplus\frac{j}{2^n})\,dt\,dx\\
&=-\int_{0}^{\frac{1}{2^n}}\left(
\int_{x+\frac{i}{2^n}}^{\frac{i+1}{2^n}}D_{2^n}(t\dotplus\frac{j}{2^n})\,dt
+\int_{\frac{i+1}{2^n}}^{\frac{k^*-1}{2^n}}D_{2^n}(t\dotplus\frac{j}{2^n})\,dt
+\int_{\frac{k^*-1}{2^n}}^{\xi}D_{2^n}(t\dotplus\frac{j}{2^n})\,dt
\right)dx\\
&=L_1+L_2+L_3.
\end{align*}
Now, we discuss the expression $L_1$.
$$
L_1=-\int_{0}^{\frac{1}{2^n}}
\int_{(x+\frac{i}{2^n})\dotplus\frac{j}{2^n}}^{\frac{i+1}{2^n}\dotplus\frac{j}{2^n}}D_{2^n}(t)\,dt\,dx=
-\int_{0}^{\frac{1}{2^n}}
\int_{(\frac{i}{2^n}\dotplus\frac{j}{2^n})+x}^{(\frac{i}{2^n}\dotplus\frac{j}{2^n})+\frac{1}{2^n}}D_{2^n}(t)\,dt\,dx
$$
$L_1\neq  0$ only in that case $i=j$ (see \eqref{EqPaleyLemma}) and in this case 
$
L_1=-\int_{0}^{\frac{1}{2^n}}  1-2^nx\,dx=-\frac{1}{2^{n+1}}.
$
That is, we have  
$$
L_1=\begin{cases}
-\frac{1}{2^{n+1}}, &  \textrm{if } i= j,\\
0, & \textrm{if } i\neq j.\\
\end{cases}
$$
For the expression $L_2$ we write 
$$
L_2=-\int_{0}^{\frac{1}{2^n}} 
\sum_{r=i+1}^{k^*-2}\int_{\frac{r}{2^n}\dotplus\frac{j}{2^n}}^{(\frac{r}{2^n}\dotplus\frac{j}{2^n})+\frac{1}{2^n}}D_{2^n}(t)\,dt
\,dx
=\begin{cases}
-\frac{1}{2^n},& \textrm{if } i+1\leq j\leq k^*-2,\\
0, & \textrm{otherwise.}
\end{cases}
$$
At last, we discuss the expression $L_3$.
$$
L_3= -\int_{0}^{\frac{1}{2^n}}
\int_{\frac{k^*-1}{2^n}\dotplus\frac{j}{2^n}}^{\xi\dotplus\frac{j}{2^n}}D_{2^n}(t)\,dt
dx
=\begin{cases}
\frac{k^*-1}{2^n}-\xi  , & \textrm{if } j=k^*-1,\\
0, &\textrm{otherwise. }
\end{cases}
$$
Summarizing our results we write that 
$$
a_{ij}^\xi=\begin{cases}
-\frac{1}{2^{n+1}}, &  \textrm{if } i= j,\\
-\frac{1}{2^n},& \textrm{if } i+1\leq j\leq k^*-2,\\
\frac{k^*-1}{2^n}-\xi  , & \textrm{if }i<  j=k^*-1,\\
0, &\textrm{otherwise. }
\end{cases}
$$
It completes our proof. 
\end{proof}

We mention that for $\xi=0$ we get back the result on the matrix $\widehat{J^0}^{(n)}$ proved in paper \cite{GT}.

Using this Lemma and equation \eqref{EqLinear3} we could state our next unicity theorem.
\begin{theorem}
Let $p$ and $q$ be two integrable and continuous functions defined on the interval $[0,1[$. Then
there exists a $n^*\in {\mathbb N}$, such that the  discretized integral equations \eqref{discrete} for all $n\geq n^*$  with assumption of the original initial value problem \eqref{Cauchy} has got a unique solution.
\end{theorem}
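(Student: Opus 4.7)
The plan is to show that the coefficient matrix $I+\widehat{J^\xi}^{\top}P$ in \eqref{EqLinear3} is invertible for all $n\ge n^*$, where $n^*$ is the threshold supplied by Theorem~\ref{theorem-1} (so that \eqref{main-eq-3} holds). Since the determinant is invariant under conjugation, I would first exploit the diagonalization $P=WD_{\tilde{p}_n}W^{-1}$ guaranteed by \eqref{EqDiagonalization} and write
\[
\det(I+\widehat{J^\xi}^{\top}P)=\det\bigl(W^{-1}(I+\widehat{J^\xi}^{\top}P)W\bigr)=\det\bigl(I+(W^{-1}\widehat{J^\xi}^{\top}W)D_{\tilde{p}_n}\bigr).
\]
This reduces the problem to analysing $M:=W^{-1}\widehat{J^\xi}^{\top}W$, whose entries are explicitly given by Lemma~\ref{Lemma-Matrix-J}.

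The second step is to read off the zero pattern of $M$ from the lemma. The matrix $M$ has three pieces: an upper-triangular $(k^{*}-1)\times(k^{*}-1)$ block in the top-left corner (with diagonal $-1/2^{n+1}$ and strictly upper-triangular entries $-1/2^{n}$), a lower-triangular $(2^{n}-k^{*})\times(2^{n}-k^{*})$ block in the bottom-right corner (with diagonal $1/2^{n+1}$ and strictly lower-triangular entries $1/2^{n}$), and a single middle row (index $k^{*}-1$) whose only nonzero entry is $\frac{2k^{*}-1}{2^{n+1}}-\xi$ on the diagonal; the column $k^{*}-1$ is the only bridge between the upper and lower blocks. Right-multiplication by the diagonal $D_{\tilde{p}_n}$ merely rescales columns, so $I+MD_{\tilde{p}_n}$ inherits the same zero pattern, with diagonal entries
\[
1+M_{ii}\,\tilde{p}_n\!\bigl(\tfrac{i}{2^n}\bigr)
=\begin{cases}
1-\tfrac{\tilde{p}_n(i/2^n)}{2^{n+1}}, & 0\le i<k^{*}-1,\\[2pt]
1+\bigl(\tfrac{2k^{*}-1}{2^{n+1}}-\xi\bigr)\tilde{p}_n\!\bigl(\tfrac{k^{*}-1}{2^n}\bigr), & i=k^{*}-1,\\[2pt]
1+\tfrac{\tilde{p}_n(i/2^n)}{2^{n+1}}, & k^{*}\le i\le 2^{n}-1,
\end{cases}
\]
each of which is strictly larger than $3/4$ by \eqref{main-eq-3} and the observation (already used in Theorem~\ref{theorem-1}) that $\frac{2k^{*}-1}{2^{n+1}}-\xi$ is bounded in absolute value by $\frac{1}{2^{n+1}}$ since $\xi\in I_n(k^*)$.

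The third step is to compute the determinant by expanding along row $k^{*}-1$, which contains a single nonzero entry. This yields
\[
\det(I+MD_{\tilde{p}_n})=\Bigl[1+\bigl(\tfrac{2k^{*}-1}{2^{n+1}}-\xi\bigr)\tilde{p}_n\!\bigl(\tfrac{k^{*}-1}{2^n}\bigr)\Bigr]\cdot\det(\widetilde{M}),
\]
where $\widetilde{M}$ is obtained by deleting row and column $k^{*}-1$. By the zero pattern described above, $\widetilde{M}$ is block-diagonal with an upper-triangular block of size $k^{*}-1$ and a lower-triangular block of size $2^{n}-k^{*}$, so $\det(\widetilde{M})$ is simply the product of its diagonal entries. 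Consequently the total determinant equals the product $\prod_{i=0}^{2^{n}-1}\bigl(1+M_{ii}\tilde{p}_n(i/2^n)\bigr)$, each factor of which exceeds $3/4$. Thus $\det(I+\widehat{J^\xi}^{\top}P)>0$, and \eqref{EqLinear3} uniquely determines $\mathbf{c}$, giving the unique Walsh polynomial $\overline{y}_n$.

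The one point that needs care is verifying that after deleting the middle row and column the remaining matrix is genuinely block-diagonal, i.e.\ that the column $k^{*}-1$ was the only coupling between the upper and lower parts; this is immediate from the explicit description in Lemma~\ref{Lemma-Matrix-J}, but is the place where a careless reading of the sparsity pattern would break the argument. Once that is checked the rest is a bounded cofactor expansion and the uniform lower bound $3/4$ from \eqref{main-eq-3}.
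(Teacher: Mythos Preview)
Your proposal is correct and follows essentially the same approach as the paper: conjugate by $W$ using \eqref{EqDiagonalization}, invoke Lemma~\ref{Lemma-Matrix-J}, and conclude that the determinant is the product of the diagonal entries, each bounded away from zero by the choice of $n^*$. The paper simply asserts the product formula directly, whereas you spell out the reason it holds (expand along the single-entry row $k^*-1$, then observe the remaining minor is block-diagonal with triangular blocks), so your write-up is in fact a more detailed version of the paper's own argument.
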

\begin{proof}
The existence of the solution of discretized integral equation follows from Theorem  \ref{theorem-1}. To prove the unicity of solution we see the matrix equation \eqref{EqLinear3} and we calculate the  value  $\det({I}+\widehat{J^\xi }^\top P)$, as we mentioned above.

Let us set $n^*$ and $k^*$ as we did in Theorem \ref{theorem-1}. 
By the diagonalization \eqref{EqDiagonalization} of the matrix $P$  and Lemma \ref{Lemma-Matrix-J} we obtain
\begin{align*}
\det(I+\widehat{J^\xi}^{\top}P)
&=\det(WIW^{-1}+WW^{-1}\widehat{J^\xi}^{\top}WD_{\tilde{p}_n}W^{-1})\\
&=\det(W(I+W^{-1}\widehat{J^\xi}^{\top}WD_{\tilde{p}_n})W^{-1})\\
&=\det(I+W^{-1}\widehat{J^\xi}^{\top}WD_{\tilde{p}_n})\\
&=\prod_{i=0}^{k^*-2}\left(  1-\frac{\tilde{p}_n(\frac{i}{2^n})}{2^{n+1}} \right)
\left( 1+ (\frac{2k^*-1}{2^{n+1}}-\xi)\tilde{p}_n(\frac{k^*-1}{2^n})\right)
\prod_{j=k^*}^{2^n-1}\left( 1+\frac{\tilde{p}_n(\frac{j}{2^n})}{2^{n+1}}\right).
\end{align*}
	
The definition  of $n^*$ gives that 
$$
\det(I+\widehat{J^\xi}^{\top}P)\neq 0 \quad \textrm{for all }n\geq n^*.
$$	
This completes the proof of this Theorem.	
\end{proof}

\section{Estimate of error}
First of all, we start with a Lemma. It discuss the behavior of a special step function, after integrating it from $\xi$ to $x$ and  applying the conditional expectation operator $S_{2^n}$ on it.
\begin{lemma} \label{lemma-step-function} Suppose that the function $f\colon [0,1[\to {\mathbb R}$ is constant on the dyadic intervals $I_n(i)$ ($i=1,\ldots , 2^n$). That is, its form is 
$$
f(x):=\sum_{k=1}^{2^n} a_k	\chi_{I_n(k)},
$$
with real numbers $a_k$ ($k=1,\ldots ,2^n$). We set $x\in I_n(i)$ for a fixed $i$ ($i=1,\ldots , 2^n$).
Then 
\begin{equation}
\label{constant-f}
S_{2^n}\left( \int_{\xi}^{.} f(t)dt\right)(x)=
\begin{cases}
a_{k^*}(\frac{k^*}{2^n}-\xi)+
\sum\limits_{k=k^*+1}^{i-1}\frac{a_k}{2^n}+\frac{a_i}{2^{n+1}}, & \textrm{if }k^*<i,\\
a_{k^*}(\frac{2k^*-1}{2^{n+1}}-\xi), & \textrm{if }k^*=i,\\
-\sum\limits_{k=i+1}^{k^*-1}\frac{a_k}{2^n}-\frac{a_i}{2^{n+1}}+a_{k^*}(\frac{k^*-1}{2^n}-\xi),  &\textrm{if }k^*>i,
\end{cases}
\end{equation}
where $k^* \in \{1,\ldots, 2^n\}$, such that $\xi \in I_n(k^*)$.
\end{lemma}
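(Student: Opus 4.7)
The strategy is to reduce to the single-characteristic case already computed in Section~3 by linearity. First I would write
\begin{equation*}
S_{2^n}\!\left( \int_{\xi}^{\cdot} f(t)\,dt\right)(x) = \sum_{k=1}^{2^n} a_k \, S_{2^n}\!\left( \int_{\xi}^{\cdot}\chi_{I_n(k)}(t)\,dt\right)(x)
\end{equation*}
using linearity of the integral and of the partial-sum operator $S_{2^n}$. The inner expression is precisely the quantity whose value was determined in \eqref{alatt}, \eqref{k�z�tt}, \eqref{felett}; these three formulas correspond to the position of $\xi$ relative to the interval $I_n(k)$, that is, to $k>k^*$, $k=k^*$, and $k<k^*$ respectively, where $k^*$ is the unique index with $\xi\in I_n(k^*)$.

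Fixing $x\in I_n(i)$, I would then split the sum over $k$ at $k=k^*$ to choose which of the three piecewise formulas to apply, and further at $k=i$ to pick the correct row within that formula (i.e.\ whether $x$ lies left of, inside, or right of $I_n(k)$). The three branches in \eqref{constant-f} correspond to the three cases $k^*<i$, $k^*=i$, $k^*>i$. As a representative computation, when $k^*<i$ the terms with $k<k^*$ and with $k>i$ vanish, the $k=k^*$ term gives $a_{k^*}(k^*/2^n-\xi)$ from the third row of \eqref{k�z�tt}, each term $k^*<k<i$ contributes $a_k/2^n$ from the third row of \eqref{alatt}, and the $k=i$ term contributes $a_i/2^{n+1}$ from the second row of \eqref{alatt}; summing these produces the first branch of \eqref{constant-f}. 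The case $k^*=i$ leaves only the $k=k^*$ contribution $a_{k^*}((2k^*-1)/2^{n+1}-\xi)$ via the second row of \eqref{k�z�tt}. The case $k^*>i$ is handled symmetrically: the negative contributions $-a_i/2^{n+1}$ and $-a_k/2^n$ come from the second and first rows of \eqref{felett}, while the $k=k^*$ contribution $a_{k^*}((k^*-1)/2^n-\xi)$ comes from the first row of \eqref{k�z�tt}, and all other indices vanish.

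There is no real obstacle here beyond the clerical bookkeeping of matching the correct row of the correct piecewise formula to each index $k$; the lemma is a direct consequence of \eqref{alatt}--\eqref{felett} combined with linearity. If anything, the main attention point is ensuring that the interplay between the relations $k\lessgtr k^*$ (which selects the formula) and $k\lessgtr i$ (which selects the row, since $x\in I_n(i)$) is handled consistently in all three principal cases.
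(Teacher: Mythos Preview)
Your proposal is correct and follows essentially the same approach as the paper's proof: both start from the linearity decomposition $S_{2^n}(\int_\xi^\cdot f)=\sum_k a_k\,S_{2^n}(\int_\xi^\cdot \chi_{I_n(k)})$, split the sum at $k=k^*$ to select among \eqref{alatt}--\eqref{felett}, and then treat the three cases $k^*<i$, $k^*=i$, $k^*>i$ by reading off the appropriate rows. If anything, your write-up is slightly more explicit about which row of which formula is being invoked for each index $k$.
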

We note that $f(\frac{k-1}{2^n})=a_k$ for all $k=1,\ldots ,2^n$. 
\begin{proof}It is easily seen that 
\begin{equation*}
S_{2^n}\left( \int_{\xi}^{.} f(t)dt\right)(x)=\sum_{k=1}^{2^n}a_k S_{2^n}\left( \int_{\xi}^{.} \chi_{I_n(k)}(t)dt\right)(x).
\end{equation*}	 
We divide the sum into three parts as follows 
\begin{eqnarray*}
S_{2^n}\left( \int_{\xi}^{.} f(t)dt\right)(x)&=&\sum_{k=1}^{k^*-1}a_k S_{2^n}\left( \int_{\xi}^{.} \chi_{I_n(k)}(t)dt\right)(x)+a_{k^*} S_{2^n}\left( \int_{\xi}^{.} \chi_{I_n(k^*)}(t)dt\right)(x)\\
&&+\sum_{k=k^*+1}^{2^n}a_k S_{2^n}\left( \int_{\xi}^{.} \chi_{I_n(k)}(t)dt\right)(x).
\end{eqnarray*}	
We have three cases $k^*<i$, $k^*=i$ and $k^*>i$.

First, we discuss the case $k^*<i$. 
Using  equations \eqref{alatt}, \eqref{között} and \eqref{felett}, we immediately get 
\begin{equation}\label{step-f-1}
	S_{2^n}\left( \int_{\xi}^{.} f(t)dt\right)(x)=
0+a_{k^*}(\frac{k^*}{2^n}-\xi)+
\sum_{k=k^*+1}^{i-1}\frac{a_k}{2^n}+\frac{a_i}{2^{n+1}} .
\end{equation}

Second, we set $k^*=i$. Equations \eqref{alatt}, \eqref{között} and \eqref{felett} yield 
\begin{equation}\label{step-f-2}
	S_{2^n}\left( \int_{\xi}^{.} f(t)dt\right)(x)=
	0+a_{k^*}(\frac{2k^*-1}{2^{n+1}}-\xi)+0.
\end{equation}

At last, we set $k^*>i$.
By equations \eqref{alatt}, \eqref{között} and \eqref{felett} we write  
\begin{equation}\label{step-f-3}
S_{2^n}\left( \int_{\xi}^{.} f(t)dt\right)(x)=-\sum_{k=i+1}^{k^*-1}\frac{a_k}{2^n}-\frac{a_i}{2^{n+1}}+a_{k^*}(\frac{k^*-1}{2^n}-\xi) +0.
\end{equation}
Summarizing our results in equations \eqref{step-f-1}-\eqref{step-f-3} we get 
our Lemma.
\end{proof}

The modulus of continuity  of a function is defined by
$$
\omega_nf:=\sup\{|f(x\dotplus h)-f(x)|\colon x\in[0,1[,0\le h<2^{-n}\}.
$$
It is easily seen, that
$$|S_{ 2^n} f (x) -f (x)| \leq \omega_n f.
$$
The integral modulus of continuity is defined by 
\[
\omega^{(1)}_nf:=\sup\{\int_0^1|f(x\dotplus h)-f(x)|\,dx\colon \,0\le h<2^{-n}\}.
\]
Indeed, it is not hard to see, that
\[
\int_0^1|S_{2^n}f(x)-f(x)|\,dx\le\omega^{(1)}_nf.
\]
For more details see \cite{SchippBook}.

In this section we discuss the upper estimation of the error 
$
|y(x)-\overline{y}_n(x)|
$
for every point $x\in[0,1[$, where $y$ is the exact solution and $\overline{y}$ is the numerical solution of the Cauchy problem. As a consequence we state our main Theorem.
\begin{theorem}
Let $p$ and $q$ be two integrable and continuous functions defined on the interval $[0,1[$. Then
the solution $\overline{y}_n(x)$ of 
 the  discretized integral equation \eqref{discrete} converges uniformly to the solution of the  initial value problem \eqref{Cauchy} on the interval $[0,1[$.
\end{theorem}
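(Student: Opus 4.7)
The plan is to first bound the error at the dyadic endpoints $x_i:=\frac{i-1}{2^n}$ for $i=1,\dots,2^n$ by a discrete Gronwall argument, and then extend to all of $[0,1[$ using that $\overline{y}_n$ is constant on every dyadic interval $I_n(i)$ and $y$ is (uniformly) continuous.

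First I evaluate both functions at $x_i$. By the integral equation \eqref{integral-eq}, $y(x_i)=\eta+\int_{\xi}^{x_i}g(t)\,dt$ with $g(t):=q(t)-p(t)y(t)$, which I split over the dyadic intervals $I_n(k^*),I_n(k^*+1),\dots$. For $\overline{y}_n(x_i)$ I invoke Lemma~\ref{lemma-step-function} applied to the piecewise-constant function $\tilde g_n:=\tilde q_n-\tilde p_n\overline{y}_n$ with values $a_k:=\tilde q_n(x_k)-\tilde p_n(x_k)\overline{y}_n(x_k)$, yielding a closed form in each of the three cases $i>k^*$, $i=k^*$, $i<k^*$. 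Subtracting, say in Case~I ($i>k^*$), the error $e_i:=y(x_i)-\overline{y}_n(x_i)$ decomposes as
\[
e_i=\Bigl(\int_{\xi}^{k^*/2^n}g-a_{k^*}\bigl(\tfrac{k^*}{2^n}-\xi\bigr)\Bigr)+\sum_{k=k^*+1}^{i-1}\int_{I_n(k)}(g-a_k)\,dt-\frac{a_i}{2^{n+1}},
\]
and analogously in Case~II; Case~III gives $e_{k^*}$ directly via the explicit formula and \eqref{main-eq-3}.

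Second I bound each piece pointwise. For $t\in I_n(k)$ I use the identity
\[
g(t)-a_k=\bigl[q(t)-\tilde q_n(x_k)\bigr]-\bigl[p(t)-\tilde p_n(x_k)\bigr]y(t)-\tilde p_n(x_k)\bigl[y(t)-y(x_k)\bigr]-\tilde p_n(x_k)\,e_k,
\]
whose first three summands are bounded in absolute value by $\omega_n q+\|y\|_\infty\omega_n p+\|\tilde p_n\|_\infty\omega_n y$ (here I use that $s\dotplus t<2^{-n}$ whenever $s,t\in I_n(k)$, so the dyadic modulus controls oscillation on $I_n(k)$, together with $\|\tilde p_n\|_\infty\le\|p\|_\infty$). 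Integrating over $I_n(k)$ and summing, together with a preliminary uniform bound $|\overline{y}_n|\le C$ obtained from the very same multistep recursion of Section~3 and a first Gronwall estimate, I arrive at
\[
|e_i|\le C\bigl(\omega_n q+\omega_n p+\omega_n y+2^{-n}\bigr)+\sum_{k=k^*}^{i-1}\frac{|\tilde p_n(x_k)|}{2^n}\,|e_k|,
\]
where the $2^{-n}$ absorbs the length of the boundary sub-interval $[\xi,k^*/2^n]\subset I_n(k^*)$ and the terminal summand $a_i/2^{n+1}$.

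Third I apply discrete Gronwall. Since $\sum_{k=1}^{2^n}|\tilde p_n(x_k)|/2^n=\|\tilde p_n\|_1\le\|p\|_1$ is bounded uniformly in $n$, and since $\omega_n p,\omega_n q,\omega_n y\to 0$ (the latter follows because $y$ is $C^1$ on $[0,1[$ with $y'=q-py\in L^1$, hence uniformly continuous), discrete Gronwall yields
\[
\max_{1\le i\le 2^n}|e_i|\le C\bigl(\omega_n q+\omega_n p+\omega_n y+2^{-n}\bigr)e^{\|p\|_1}\longrightarrow 0.
\]
For arbitrary $x\in[0,1[$ I pick $i$ with $x\in I_n(i)$; then $\overline{y}_n(x)=\overline{y}_n(x_i)$ and
\[
|y(x)-\overline{y}_n(x)|\le|y(x)-y(x_i)|+|e_i|\le\omega_n y+\max_{j}|e_j|,
\]
which tends to zero uniformly in $x$. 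The main obstacle is the careful book-keeping of boundary contributions around $\xi$ across the three cases of Lemma~\ref{lemma-step-function}, together with establishing the a priori uniform bound on $\overline{y}_n$ that feeds into the pointwise estimate of $a_i$; once this accounting is in place the Gronwall step and the extension to all $x$ are routine.
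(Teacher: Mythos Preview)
Your overall strategy---compare $y$ and $\overline y_n$ at the dyadic left endpoints via Lemma~\ref{lemma-step-function}, run a discrete Gronwall, then pass to arbitrary $x$ via $\omega_n y$---is close in spirit to the paper's argument, but there is a genuine gap in your pointwise estimates. The hypotheses say only that $p,q$ are continuous on $[0,1[$ and integrable; they need not be bounded, let alone uniformly continuous (e.g.\ $p(x)=-\log(1-x)$). Consequently the dyadic moduli $\omega_n p$ and $\omega_n q$ that appear in your bound
\[
|g(t)-a_k|\le \omega_n q+\|y\|_\infty\,\omega_n p+\|\tilde p_n\|_\infty\,\omega_n y
\]
may be $+\infty$ for every $n$, and likewise $\|p\|_\infty$ and $\|\tilde p_n\|_\infty$ may be infinite. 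Your final Gronwall estimate $\max_i|e_i|\le C(\omega_n q+\omega_n p+\omega_n y+2^{-n})e^{\|p\|_1}$ therefore does not follow from the stated assumptions.

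The paper avoids this by comparing $\overline y_n$ not with $y$ but with $S_{2^n}y$, setting $z_n=\overline y_n-S_{2^n}y$, and decomposing so that the $p$- and $q$-errors appear only through the \emph{integral} moduli $\omega_n^{(1)}p,\omega_n^{(1)}q$ (which tend to $0$ for any $L^1$ function) and through $\max_i 2^{-n}|\tilde p_n(i/2^n)|$ (which tends to $0$ by \eqref{main-eq}). Your decomposition can be repaired along the same lines: instead of bounding $|q(t)-\tilde q_n(x_k)|$ and $|p(t)-\tilde p_n(x_k)|$ pointwise, integrate over $I_n(k)$ \emph{first} and then sum, so that $\int_0^1|q-S_{2^n}q|\le\omega_n^{(1)}q$ and $\int_0^1|p-S_{2^n}p|\le\omega_n^{(1)}p$ appear; for the third summand, summing $|\tilde p_n(x_k)|\,2^{-n}$ gives $\|\tilde p_n\|_1\le\|p\|_1$ rather than the possibly infinite $\|\tilde p_n\|_\infty$. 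With these changes (and handling the terminal term $a_i/2^{n+1}$ by moving the piece $\tilde p_n(x_i)e_i/2^{n+1}$ to the left side and invoking \eqref{main-eq-3}, rather than via an a~priori bound on $\overline y_n$) your Gronwall scheme goes through.
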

\begin{proof}
In the paper \cite{GT},  $\xi=0$ was chosen. Since $\xi=0$ is a dyadic rational, so it is a left end point of a dyadic interval (it is true for every $n$). But, for such a $\xi$ which is not a dyadic rational the proof is more complicated. Moreover, the proof  has at least two parts. We have to discuss the cases while $x\in [\xi,1[$ or $x\in [0,\xi[$, it follows from the multistep algorithm.

We write 
\begin{equation}\label{EqTwoSteps}
|y(x)-\overline{y}_n(x)|\leq |y(x)-S_{2^{n}}y(x)|+|S_{2^{n}}y(x)-\overline{y}_n(x)|.
\end{equation}

First, we discuss the expression $|y-S_{2^{n}}y|$.  
It is well-known that the unique solution of Cauchy problem \eqref{Cauchy} is given by the formula
\begin{equation}\label{EqSolution}
y(x)=e^{-\int_{\xi}^{x}p(t)\,dt}\left(\eta+\int_{\xi}^{x} q(t)e^{\int_{\xi}^{t}p(s)\,ds}\,dt\right)\qquad(0\le x<1).
\end{equation}

Let us note that  the solution $y$ of the Cauchy problem \eqref{Cauchy} can be extended continuously to the close interval $[0,1]$, since the integrability of the function $p$ and $q$ ensures that the limit
\[
\lim_{x\to1^{-}}y(x)=e^{-\int_{\xi }^{1}p(t)\,dt}\left(\eta+\int_{\xi }^{1} q(t)e^{\int_{\xi }^{t}p(s)\,ds}\,dt\right)
\]
is finite. This means that the solution $y$ has finite modulus of continuity and 
\begin{equation}\label{eq-omega}
|y(x)-S_{2^n}y(x)|\le\omega_{n}y
\end{equation}
for all $x\in[0,1[$. Therefore, the first part of \eqref{EqTwoSteps} tends uniformly to zero.

Let us discuss the second part $
|S_{2^{n}}y-\overline{y}_n|
$ of inequality \eqref{EqTwoSteps}. To do this we introduce the notation 
$$
z_n(x):=\overline{y}_n(x)-S_{2^{n}}y(x) \quad \textrm{for any } x\in [0,1[.
$$
Applying equalities \eqref{integral-eq} and \eqref{discrete} we  write 
\begin{eqnarray}\label{eq-zn}
z_n(x)&=&\eta+S_{2^n}\left(\int_{\xi}^{.} S_{2^n}q(t)-S_{2^n}p(t)\overline{y}_n(t)\,dt\right)(x)-
S_{2^{n}}\left(\eta+\int_{\xi}^{.} q(t)-p(t)y(t)\,dt\right)(x)\nonumber\\
&=&S_{2^{n}}\left(\int_{\xi}^{.}S_{2^{n}}q(t)-q(t)\,dt\right)(x)-
S_{2^{n}}\left(\int_{\xi}^{.}(S_{2^{n}}p(t)-p(t))y(t)\,dt\right)(x)
\nonumber\\
&&+
S_{2^{n}}\left(\int_{\xi}^{.}S_{2^{n}}p(t)(y(t)-S_{2^{n}}y(t))\,dt\right)(x)
-S_{2^{n}}\left(\int_{\xi}^{.}S_{2^{n}}p(t)z_n(t)\,dt\right)(x)\nonumber\\
&=:&m_n^1(x)-m_n^2(x)+m_n^3(x)-S_{2^{n}}\left(\int_{\xi}^{.}S_{2^{n}}p(t)z_n(t)\,dt\right)(x)
\end{eqnarray}
for all $x\in[0,1[$. Let $m_n(x)$ be defined by
$$
m_n(x):=m_n^1(x)-m_n^2(x)+m_n^3(x).
$$
First, we estimate the expression $|m_n^2(x)|$. 
$$
|m_n^2(x)|\leq S_{2^{n}}\left|\int_{\xi}^{.}|S_{2^{n}}p(t)-p(t)||y(t)|\,dt\right|(x)
\leq \Vert y\Vert_\infty \int_0^1 |S_{2^{n}}p(t)-p(t)|\, dt
\leq \Vert y\Vert_\infty \omega_n^{(1)}p
$$
for all $x\in [0,1[$. We note that  $\Vert y\Vert_\infty$ is finite, since $y$ is a bounded function on $[0,1[$. 
Choosing $y\equiv 1$ and $p\equiv q$, we immediately get 
$$
|m_n^1(x)|\leq \omega_n^{(1)}q \quad \textrm{for all }  x\in [0,1[.
$$

Third, we estimate the expression $|m_n^3(x)|$. Set $x\in I_n(i)$.
Since,
\[
\int_{\frac{i-1}{2^{n}}}^{\frac{i}{2^{n}}}S_{2^{n}}y(t)-y(t)\,dt=0\quad(i=1,2,\ldots,2^{n}).
\]
and $S_{2^{n}}p$ is constant on  all dyadic intervals $I_n(i)$ 
for the function $m_n^3(x)$ we write 
$$
m_n^3(x)= 2^n \int_{\frac{i-1}{2^{n}}}^{\frac{i}{2^{n}}}\int_{\xi}^{x}S_{2^{n}}p(t)(y(t)-S_{2^{n}}y(t))\,dt\,dx.
$$
Set $k^*$ such that $\xi \in I_n(k^*)$ ($k^*=k^*(n)$, that is $k^*$ depends on $n$). We have three cases $k^*=i$, $k^*<i$ and $k^*>i$.
If $k^*=i$ we have 
\begin{eqnarray*}
|m_n^3(x)|&\leq& 2^n |S_{2^{n}}p(\frac{i-1}{2^n})|\int_{\frac{i-1}{2^{n}}}^{\frac{i}{2^{n}}}\left|\int_{\xi}^{x}|y(t)-S_{2^{n}}y(t)|\,dt\right|dx\\
&\leq& 2^n |S_{2^{n}}p(\frac{i-1}{2^n})\int_{\frac{i-1}{2^{n}}}^{\frac{i}{2^{n}}}\int_{\frac{i-1}{2^n}}^{\frac{i}{2^n}}|y(t)-S_{2^{n}}y(t)|\,dt\,dx\\
&\leq& \max_{0\le i<2^{n}}\left\{\frac{1}{2^n}
S_{2^{n}}p(\frac{i}{2^{n}})\right\}\omega_ny.
\end{eqnarray*}
Now, we set $k^*<i$ (that is $\xi<x$). 
\begin{eqnarray*}
&&	|m_n^3(x)|\leq \\
	&\leq & \hspace{-8pt}2^n \int_{\frac{i-1}{2^{n}}}^{\frac{i}{2^{n}}}\hspace{-5pt}
	\left(|S_{2^{n}}p(\frac{k^*-1}{2^n})|\int_{\xi}^{\frac{k^*}{2^n}}\hspace{-10pt}|y(t)-S_{2^{n}}y(t)|\,dt+|S_{2^{n}}p(\frac{i-1}{2^n})|\int_{\frac{i-1}{2^n}}^{x}\hspace{-10pt}|y(t)-S_{2^{n}}y(t)|\,dt\right)dx\\
	&\leq&\hspace{-8pt} 2^n \int_{\frac{i-1}{2^{n}}}^{\frac{i}{2^{n}}}\hspace{-5pt}
	\left(|S_{2^{n}}p(\frac{k^*-1}{2^n})|\int_{\frac{k^*-1}{2^n}}^{\frac{k^*}{2^n}}\hspace{-10pt}|y(t)-S_{2^{n}}y(t)|\,dt+|S_{2^{n}}p(\frac{i-1}{2^n})|\int_{\frac{i-1}{2^n}}^{\frac{i}{2^n}}\hspace{-10pt}|y(t)-S_{2^{n}}y(t)|\,dt\right)dx\\
	&\leq&\hspace{-8pt} 2 \max_{0\le i<2^{n}}\left\{\frac{1}{2^n}
	S_{2^{n}}p(\frac{i}{2^{n}})\right\}\omega_ny.
\end{eqnarray*}
Analogously, for $k^*>i$ (that is $\xi>x$) we get 
$$
|m_n^3(x)|\leq 2\max_{0\le i<2^{n}}\left\{\frac{1}{2^n}
S_{2^{n}}p(\frac{i}{2^{n}})\right\}\omega_ny.
$$
Summarizing our results on $|m_n^i(x)|$ ($i=1,2,3$) we have that
\begin{equation}\label{eq-Mn}
m_n(x)\leq \omega^{(1)}_{n}q+\|y\|_{\infty}\omega^{(1)}_{n}p+2\max_{0\le i<2^{n}}\left\{\frac{1}{2^n}
S_{2^{n}}p(\frac{i}{2^{n}})\right\}\omega_ny=:M_n
\end{equation}
for all $x\in[0,1[$. 
By \eqref{main-eq} the sequence $M_n$ tends to zero if $n\to\infty$.

Since, 
\begin{equation}\label{eq-zn-2}
z_n(x)=m_n(x)-S_{2^{n}}\left(\int_{\xi}^{.}S_{2^{n}}p(t)z_n(t)\,dt\right)(x),
\end{equation}
we have to discuss the last expression on the right side of the equation.
The functions $S_{2^{n}}p$, $m_n$ and $z_n$ are constants on the dyadic intervals $I_n(i)$ for all $i=1,2,\ldots,2^{n}$. Hence, we apply Lemma \ref{lemma-step-function} for cases $k^*=i$, $k^*<i$ and $k^*>i$.

First, we discuss cases $k^*=i$ and  $k^*<i$. 

\emph{Case $k^*=i$} ($x\in I_n(i)$).
\begin{equation*}\label{i-egyenlő-1}
S_{2^{n}}\left(\int_{\xi}^{.}S_{2^{n}}p(t)z_n(t)\,dt\right)(x)=S_{2^{n}}p(\frac{k^*-1}{2^n})z_n(\frac{k^*-1}{2^n})\left( \frac{2{k^*}-1}{2^{n+1}}-\xi\right).
\end{equation*}
From this we immediately get 
\begin{equation*}
z_n(\frac{k^*-1}{2^n})=m_n(\frac{k^*-1}{2^n})-S_{2^{n}}p(\frac{k^*-1}{2^n})z_n(\frac{k^*-1}{2^n})\left( \frac{2{k^*}-1}{2^{n+1}}-\xi\right)
\end{equation*}
and 
\begin{equation}\label{i-egyenlő-2}
z_n(\frac{k^*-1}{2^n})=\frac{m_n(\frac{k^*-1}{2^n})}{1+S_{2^{n}}p(\frac{k^*-1}{2^n})\left( \frac{2{k^*}-1}{2^{n+1}}-\xi\right)}.
\end{equation}
We note that the denominator is not 0, if $n$ is big enough (see later).

\emph{Case $k^*<i$} ($x\in I_n(i)$).
\begin{eqnarray*}\label{i-kisebb-1}
&&S_{2^{n}}\left(\int_{\xi}^{.}S_{2^{n}}p(t)z_n(t)\,dt\right)(x)=S_{2^{n}}p(\frac{k^*-1}{2^n})z_n(\frac{k^*-1}{2^n})\left( \frac{{k^*}}{2^{n}}-\xi\right)\nonumber\\
&&\hspace{1cm}+\frac{1}{2^{n}}\sum_{k=k^*+1}^{i-1}S_{2^{n}}p(\frac{k-1}{2^{n}})z_n(\frac{k-1}{2^{n}})+\frac{1}{2^{n+1}}S_{2^{n}}p(\frac{i-1}{2^{n}})z_n(\frac{i-1}{2^{n}}).
\end{eqnarray*}
This yields
\begin{eqnarray*}
z_n(\frac{i-1}{2^n})&=&m_n(\frac{i-1}{2^n})-\frac{1}{2^{n}}\sum_{k=k^*+1}^{i-1}S_{2^{n}}p(\frac{k-1}{2^{n}})z_n(\frac{k-1}{2^{n}})\nonumber\\
&&-\frac{1}{2^{n+1}}S_{2^{n}}p(\frac{i-1}{2^{n}})z_n(\frac{i-1}{2^{n}})
-S_{2^{n}}p(\frac{k^*-1}{2^n})z_n(\frac{k^*-1}{2^n})\left( \frac{{k^*}}{2^{n}}-\xi\right)
\end{eqnarray*}
and
\begin{eqnarray}\label{i-kisebb-2}
z_n(\frac{i-1}{2^n})\left(1+\frac{1}{2^{n+1}}S_{2^{n}}p(\frac{i-1}{2^{n}})\right)&=&m_n(\frac{i-1}{2^n})-\frac{1}{2^{n}}\sum_{k=k^*+1}^{i-1}S_{2^{n}}p(\frac{k-1}{2^{n}})z_n(\frac{k-1}{2^{n}})\nonumber\\
&&-S_{2^{n}}p(\frac{k^*-1}{2^n})z_n(\frac{k^*-1}{2^n})\left( \frac{{k^*}}{2^{n}}-\xi\right)
\end{eqnarray}
Applying equations \eqref{i-egyenlő-2} and \eqref{i-kisebb-2} and mathematical induction we have
\begin{eqnarray}\label{rekurziv-formula}
z_n(\frac{i-1}{2^n})\left(1+\frac{1}{2^{n+1}}S_{2^{n}}p(\frac{i-1}{2^{n}})\right)&=&m_n(\frac{i-1}{2^n})
-\sum_{k=k^*+1}^{i-1} \rho^{(n)}_{k-1}m_n(\frac{k-1}{2^{n}})
\prod_{j=k+1}^{i-1}\left(1-\rho^{(n)}_{j-1}\right)\nonumber\\
&&-m_n(\frac{k^*-1}{2^{n}})\sigma^{(n)}_\xi
\prod_{j=k^*+1}^{i-1}\left(1-\rho^{(n)}_{j-1}\right),
\end{eqnarray}
where 
$$
\rho^{(n)}_k:=\frac{\frac{1}{2^n}S_{2^{n}}p(\frac{k}{2^{n}})}{1+\frac{1}{2^{n+1}}S_{2^{n}}p(\frac{k}{2^{n}})},
\quad 
\sigma^{(n)}_\xi:=\frac{S_{2^{n}}p(\frac{k^*-1}{2^{n}})\left(\frac{k^*}{2^n}-\xi\right)}{1+S_{2^{n}}p(\frac{k^*-1}{2^{n}})\left(\frac{2k^*-1}{2^{n+1}}-\xi\right)}
$$
for all $k>k^*$  (and $i>k^*$). This yields 
\begin{equation}
\begin{split}
|z_n(\frac{i-1}{2^n})|&\left|1+\frac{1}{2^{n+1}}S_{2^{n}}p(\frac{i-1}{2^{n}})\right|\leq M_n
+\sum_{k=k^*+1}^{i-1} |\rho^{(n)}_{k-1}|M_n
\prod_{j=k+1}^{i-1}\left(1+|\rho^{(n)}_{j-1}|\right)\\
&+M_n|\sigma^{(n)}_\xi|
\prod_{j=k^*+1}^{i-1}\left(1+|\rho^{(n)}_{j-1}|\right)\\
=& M_n\prod_{j=k^*+1}^{i-1}\left(1+|\rho^{(n)}_{j-1}|\right)
+M_n|\sigma^{(n)}_\xi|
\prod_{j=k^*+1}^{i-1}\left(1+|\rho^{(n)}_{j-1}|\right)\\
\leq & M_n (1+|\sigma^{(n)}_\xi|)\prod_{j=1}^{i-1}\left(1+|\rho^{(n)}_{j-1}|\right)).
\end{split}
\end{equation}
First, we estimate the expression $|\sigma^{(n)}_\xi|$. By inequalities 
\eqref{main-eq} and \eqref{main-eq-2}, we could choose a natural number $n^*$, such that 
$$
\left|S_{2^{n}}p(\frac{k^*-1}{2^{n}})\left(\frac{k^*}{2^n}-\xi\right)\right|
\leq \frac{1}{2^n}\left|S_{2^{n}}p(\frac{k^*-1}{2^{n}})\right|<\frac{1}{2}
$$
for all $n\geq n^*$.
\begin{equation}\label{eq-1-4}
\left|S_{2^{n}}p(\frac{k^*-1}{2^{n}})\left(\frac{2k^*-1}{2^{n+1}}-\xi\right)\right|\leq 
|\frac{1}{2^{n+1}}S_{2^{n}}p(\frac{k^*-1}{2^{n}})|<\frac{1}{4}
\end{equation}
for all $n\geq n^*$. Analogically to inequality \eqref{main-eq-3}, we get 
$$
|\sigma^{(n)}_\xi|\leq \frac{2}{3}\quad \textrm{for } n\geq n^*
$$
and 
$$
	|z_n(\frac{i-1}{2^n})|\left|1+\frac{1}{2^{n+1}}S_{2^{n}}p(\frac{i-1}{2^{n}})\right|\leq
	2M_n \prod_{j=1}^{i-1}\left(1+|\rho^{(n)}_{j-1}|\right)).
$$
Applying  inequalities \eqref{main-eq} and  \eqref{main-eq-2} for $n\geq n^*$
in 	paper \cite{GT} it is proved 
\begin{equation}
\prod_{j=1}^{i-1}\left(1+|\rho^{(n)}_{j-1}|\right))\leq 
e^{2\int_{0}^{1}|p(x)|\,dx}.
\end{equation}
Using this we get 
$$\frac{1}{2}|z_n(x)|
=\frac{1}{2}|z_n(\frac{i-1}{2^{n}})|< 2M_n e^{2\int_{0}^{1}|p(x)|\,dx}
$$
for all $x\in I_n(i)$ and $n\geq n^*$ ($i>k^*$).
Since, the right side is independent from $x$, we write
$$
|z_n(x)|< 4M_n e^{2\int_{0}^{1}|p(x)|\,dx}
$$ 
for all $x\in [\frac{k^*}{2^n},1[$ and $n\geq n^*$.

We discuss the case $i=k^*$. From inequalities \eqref{i-egyenlő-2} \eqref{eq-1-4} we have 
$$
z_n(x)=z_n(\frac{k^*-1}{2^n})< \frac{4}{3}M_n
$$
for all $x\in I_n(k^*)$ and $n\geq n^*$.

Summarizing our results
\begin{equation}\label{eq-half-1}
|z_n(x)|< M_n \left( 4e^{2\int_{0}^{1}|p(x)|\,dx}+\frac{4}{3}\right)
\end{equation}
for all $x\in [\xi ,1[$  (more exactly $x\in [\frac{k^*-1}{2^n},1[$) and $n\geq n^*$.

\emph{Case $k^*>i$} ($x\in I_n(i)$). 
The functions $S_{2^{n}}p$, $m_n$ and $z_n$ are constants on the dyadic intervals $I_n(i)$ for all $i=1,2,\ldots,2^{n}$. Hence, we apply Lemma \ref{lemma-step-function} for equation \eqref{eq-zn-2}
\begin{eqnarray*}\label{i-nagyobb-1}
	&&S_{2^{n}}\left(\int_{\xi}^{.}S_{2^{n}}p(t)z_n(t)\,dt\right)(x)=
	S_{2^{n}}p(\frac{k^*-1}{2^n})z_n(\frac{k^*-1}{2^n})\left( \frac{{k^*-1}}{2^{n}}-\xi\right)\nonumber\\
	&&\hspace{1cm}-\frac{1}{2^{n}}\sum_{k=i+1}^{k^*-1}S_{2^{n}}p(\frac{k-1}{2^{n}})z_n(\frac{k-1}{2^{n}})-\frac{1}{2^{n+1}}S_{2^{n}}p(\frac{i-1}{2^{n}})z_n(\frac{i-1}{2^{n}}).
\end{eqnarray*}
For equality \eqref{eq-zn-2} we get
\begin{eqnarray*}
z_n(\frac{i-1}{2^n})	&=&m_n(\frac{i-1}{2^n})-
	S_{2^{n}}p(\frac{k^*-1}{2^n})z_n(\frac{k^*-1}{2^n})\left( \frac{{k^*-1}}{2^{n}}-\xi\right)\nonumber\\
	&&+\frac{1}{2^{n}}\sum_{k=i+1}^{k^*-1}S_{2^{n}}p(\frac{k-1}{2^{n}})z_n(\frac{k-1}{2^{n}})+\frac{1}{2^{n+1}}S_{2^{n}}p(\frac{i-1}{2^{n}})z_n(\frac{i-1}{2^{n}}).
\end{eqnarray*}	
That is, 
\begin{eqnarray}\label{i-nagyobb-2}
	z_n(\frac{i-1}{2^n})\left( 1-\frac{1}{2^{n+1}}S_{2^n}p(\frac{i-1}{2^n})\right)	&=&m_n(\frac{i-1}{2^n})-
	S_{2^{n}}p(\frac{k^*-1}{2^n})z_n(\frac{k^*-1}{2^n})\left( \frac{{k^*-1}}{2^{n}}-\xi\right)\nonumber\\
	&&+\frac{1}{2^{n}}\sum_{k=i+1}^{k^*-1}S_{2^{n}}p(\frac{k-1}{2^{n}})z_n(\frac{k-1}{2^{n}}).
\end{eqnarray}	
Using \eqref{i-egyenlő-2} we get 
\begin{eqnarray*}
z_n(\frac{i-1}{2^n})\left( 1-\frac{1}{2^{n+1}}S_{2^n}p(\frac{i-1}{2^n})\right)	&=&m_n(\frac{i-1}{2^n})+\frac{1}{2^{n}}\sum_{k=i+1}^{k^*-1}S_{2^{n}}p(\frac{k-1}{2^{n}})z_n(\frac{k-1}{2^{n}})\nonumber\\
&&-m_n(\frac{k^*-1}{2^n})\delta_\xi^{(n)}, 
\end{eqnarray*}
with 
$$
\delta_\xi^{(n)}:=\frac{S_{2^{n}}p(\frac{k^*-1}{2^n})\left( \frac{{k^*-1}}{2^{n}}-\xi\right)}{1+S_{2^{n}}p(\frac{k^*-1}{2^n})\left( \frac{{2k^*-1}}{2^{n}}-\xi\right)}.
$$
By mathematical induction we get 
\begin{eqnarray}\label{i-nagyobb-3}
	z_n(\frac{i-1}{2^n})\left( 1-\frac{1}{2^{n+1}}S_{2^n}p(\frac{i-1}{2^n})\right)	&=&m_n(\frac{i-1}{2^n})+\sum_{k=i+1}^{k^*-1}m_n(\frac{k-1}{2^{n}})\rho_{k-1}^{(n)}\prod_{j=i+1}^{k-1}(1+\rho_{j-1}^{(n)})\nonumber\\
	&&-m_n(\frac{k^*-1}{2^n})\delta_\xi^{(n)}\prod_{j=i+1}^{k^*-1}(1+\rho_{j-1}^{(n)}), 
\end{eqnarray}
where
$$
\rho^{(n)}_k:=\frac{\frac{1}{2^n}S_{2^{n}}p(\frac{k}{2^{n}})}{1-\frac{1}{2^{n+1}}S_{2^{n}}p(\frac{k}{2^{n}})} \quad \textrm{for } k<k^*.
$$
Equality \eqref{i-nagyobb-3} yields 
\begin{eqnarray*}
|z_n(\frac{i-1}{2^n})|\left| 1-\frac{1}{2^{n+1}}S_{2^n}p(\frac{i-1}{2^n})\right|	&\leq&M_n+\sum_{k=i+1}^{k^*-1}M_n|\rho_{k-1}^{(n)}|\prod_{j=i+1}^{k-1}(1+|\rho_{j-1}^{(n)}|)\nonumber\\
&&+M_n|\delta_\xi^{(n)}|\prod_{j=i+1}^{k^*-1}(1+|\rho_{j-1}^{(n)}|)\nonumber\\
&=&M_n(1+|\delta_\xi^{(n)}|)\prod_{j=i+1}^{k^*-1}(1+|\rho_{j-1}^{(n)}|).
\end{eqnarray*}
Analogically to inequality \eqref{main-eq-3}, we get 
$$
|\delta^{(n)}_\xi|\leq \frac{2}{3}\quad \textrm{for } n\geq n^*
$$
and 
\begin{eqnarray*}
\frac{1}{2}|z_n(\frac{i-1}{2^n})|<|z_n(\frac{i-1}{2^n})|\left| 1-\frac{1}{2^{n+1}}S_{2^n}p(\frac{i-1}{2^n})\right|	&\leq&2M_n\prod_{j=i+1}^{k^*-1}(1+|\rho_{j-1}^{(n)}|)
\end{eqnarray*}
for $n\geq n^*$.  

For $n\geq n^*$ and $k<k^*$ we have $|\rho_k^{(n)}|\leq \frac{1}{2^{n-1}}|S_{2^n}p(\frac{k}{2^n})|$ and 
\begin{eqnarray*}
\prod_{j=i+1}^{k^*-1}(1+|\rho_{j-1}^{(n)}|)
&\leq& \prod_{j=i+1}^{k^*-1}(1+\frac{1}{2^{n-1}}|S_{2^n}p(\frac{j-1}{2^n})|)\\
&=& \prod_{j=i+1}^{k^*-1}\left(1+2\left|\int_{\frac{j-1}{2^n}}^{\frac{j}{2^n}}p(x)dx\right|\right)\\
&\leq &\prod_{j=i}^{k^*-1}\left(1+2\int_{\frac{j-1}{2^n}}^{\frac{j}{2^n}}\left|p(x)\right|dx\right).
\end{eqnarray*}
The inequality between the arithmetic and geometric means yields 
\begin{eqnarray*}
	\prod_{j=i+1}^{k^*-1}(1+|\rho_{j-1}^{(n)}|)
	&\leq &\left(\frac{1}{k^*-i} \sum_{j=i}^{k^*-1}\left(1+2\int_{\frac{j-1}{2^n}}^{\frac{j}{2^n}}\left|p(x)\right|dx\right)\right)^{k^*-i}\\
	&\leq& \left(1+\frac{2}{k^*-i}\int_{0}^{1}\left|p(x)\right|dx \right)^{k^*-i}\\
	&<&e^{2\int_{0}^{1}\left|p(x)\right|dx}.
\end{eqnarray*}
That is, we get
\begin{equation}\label{eq-half-2}
|z_n(x)|=|z_n(\frac{i-1}{2^n})|\leq 4M_ne^{2\int_{0}^{1}\left|p(x)\right|dx}
\end{equation}
for all $x\in I_n(i)$ ($k^*>i$) and $n\geq n^*$. 
Summarizing our results in inequalities \eqref{eq-half-1} and \eqref{eq-half-2} we have 
\begin{equation}\label{eq-total}
|z_n(x)|< M_n \left( 4e^{2\int_{0}^{1}|p(x)|\,dx}+\frac{4}{3}\right)
\end{equation}
for all $x\in [0 ,1[$ and $n\geq n^*$.

Collecting the results of inequalities \eqref{EqTwoSteps}, \eqref{eq-omega}, \eqref{eq-zn}, \eqref{eq-Mn}, \eqref{eq-zn-2} and \eqref{eq-total}, while $n\to \infty$ we get the statement of our main Theorem. 
\end{proof}

\section{Examples for numerical solution of  Cauchy initial value problem}

In our first example  $p(x)$ is constant and $q(x)$ is bounded. Namely, we discuss the initial value problem 
\begin{equation}\label{example-1}
\begin{aligned}y'+y&=(x+1)^2,\\
y(1/2)&=5/4.
\end{aligned}
\end{equation}
The exact solution of initial value problem \eqref{example-1} is $y(x)=x^2+1$. The application of multistep algorithm is showed in Figure \ref{figure-1}. 
\begin{figure}[h]
	\centering
	\includegraphics[width=10cm]{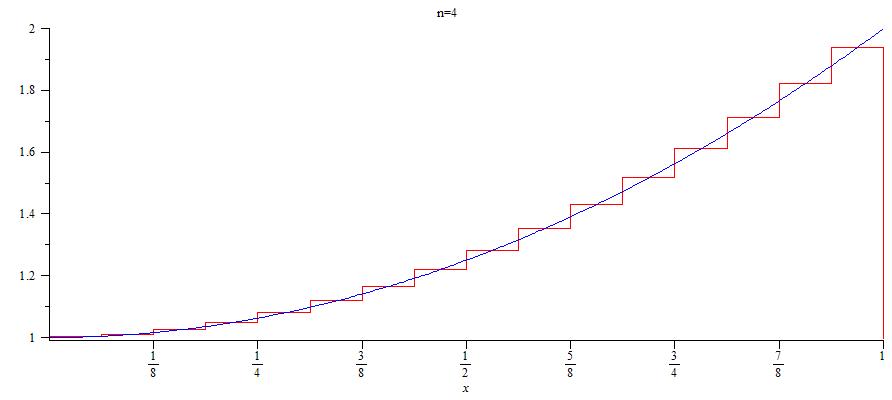}
	\caption{The numerical solution $\overline{y}_4$ of the Cauchy problem \eqref{example-1} for $n=4$.}\label{figure-1}
\end{figure}
The calculations were exact and fast. The algorithm works properly, using our theorems, the numerical solution $\overline{y}_n$ converges uniformly to the exact solution of the Cauchy problem. The supremum of the absolute difference between the numerical solution  $\overline{y}_n$ and the exact solution $y$ is reduced almost by half if the value of $n$ increased by one, as you can see in Table \ref{table-1}.  
{\tiny
	\begin{table}[h]
		\begin{tabular}{|c||c|c|c|c|c|c|c|c|}
			\hline
			n & $0\leq x<\frac{1}{8}$ & $\frac{1}{8}\leq x<\frac{2}{8}$ & $\frac{2}{8}\leq x<\frac{3}{8}$ & $\frac{3}{8}\leq x<\frac{4}{8}$ & $\frac{4}{8}\leq x<\frac{5}{8}$ & $\frac{5}{8}\leq x<\frac{6}{8}$ & $\frac{6}{8}\leq x<\frac{7}{8}$ & $\frac{7}{8}\leq x<1$\\
			\hline
			5& 0.00349579& 0.00737377& 0.01125507& 0.01513930& 0.01905982& 0.02298321& 0.02690459& 0.03082420			
			\\
			\hline
			6&0.00185003 & 0.00379615& 0.00574309& 0.00769075& 0.00964805& 0.01160543& 0.01356231& 0.01551875\\
			\hline
			7&0.00095073& 0.00192555& 0.00290057& 0.00387577& 0.00485346& 0.00583108& 0.00680857& 0.00778590\\
			\hline
			8& 0.00048182& 0.00096966& 0.00145756& 0.00194550& 0.00243407& 0.00292262& 0.00341113& 0.00389962\\
			\hline
			9& 0.00024252& 0.00048656& 0.00073060& 0.00097466& 0.00121887& 0.00146308& 0.00170728& 0.00195147
			\\
			\hline
			10&0.00012167& 0.00024371& 0.00036576& 0.00048780& 0.00060989& 0.00073198& 0.00085407& 0.00097615
			\\
			\hline
		\end{tabular}
		\caption{Estimate of $\sup |\overline{y}_n(x)-y(x)|$ on the dyadic intervals of length 1/8 for Cauchy problem \eqref{example-1}. }\label{table-1}
	\end{table}
}

In our second example we deal with the numerical solution of the initial value problem \eqref{example}. 
In this case, only the multistep algorithm works (see Figure \ref{figure-3}), because the integrability of the functions $p(x)$  and $q(x)$ is essential to calculate their Fourier coefficients which appear in the linear system. 
\begin{figure}[h]
	\centering
	\includegraphics[width=10cm]{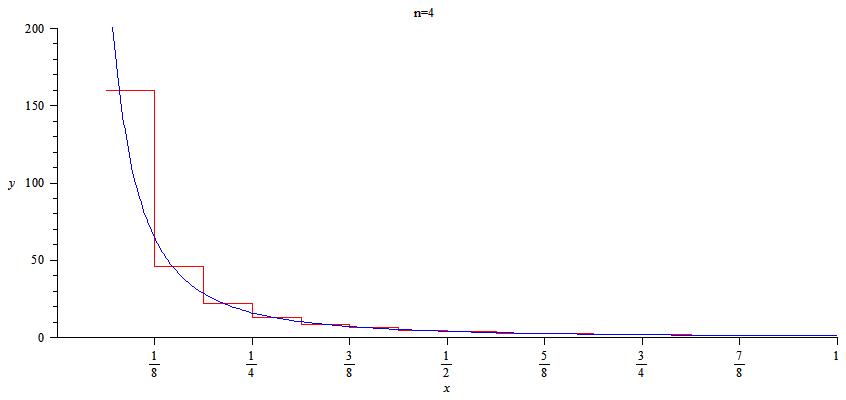}
	\caption{The numerical solution $\overline{y}_4$ of the Cauchy problem \eqref{example} for $n=4$.}\label{figure-3}
\end{figure}
Function $q(x)$ is not integrable on the interval $[0,1[$, but it is integrable on every interval of the form $[b,1[$ with $b>0$. 
The numerical solution $\overline{y}_n(x)$ is undefined on the first interval of the form $[0,1/2^n[$, because the function $q(x)$ is not integrable on it. Since the length of this interval is $1/2^n\to 0$, thus the domain $[1/2^n,1[$ of the numerical solution is approaching to the interval $[0,1[$, while the value of $\overline{y}_n$ at every point converge to the value of  exact solution $y$.
The supremum of the absolute difference between the numerical solution $\overline{y}_n$ and the exact solution  $y$ for some values of $n$ is showed in Table \ref{table-3}.
{\tiny
	\begin{table}[h]
		\begin{tabular}{|c||c|c|c|c|c|c|c|c|}
			\hline
			n & $0\leq x<\frac{1}{8}$ & $\frac{1}{8}\leq x<\frac{2}{8}$ & $\frac{2}{8}\leq x<\frac{3}{8}$ & $\frac{3}{8}\leq x<\frac{4}{8}$ & $\frac{4}{8}\leq x<\frac{5}{8}$ & $\frac{5}{8}\leq x<\frac{6}{8}$ & $\frac{6}{8}\leq x<\frac{7}{8}$ & $\frac{7}{8}\leq x<1$\\
			\hline
5&undefined& 11.52427962& 1.68047727& 0.52643361& 0.22842141& 0.11934953 &0.07021601& 0.04488844
\\
\hline
6& undefined& 6.71718816& 0.91388203& 0.27888157& 0.11938173& 0.06176085& 0.03604064& 0.02286893
\\
\hline
7&undefined& 3.65427433& 0.47760200& 0.14367669& 0.06106587& 0.03143021& 0.01826528& 0.01154620
\\
\hline
8& undefined& 1.91008439& 0.24428566& 0.07294092& 0.03088767& 0.01585627& 0.00919542& 0.00580176
\\
\hline
9&undefined& 0.97706041& 0.12355664& 0.03675180& 0.01553393& 0.00796391& 0.00461360& 0.00290813
\\
\hline
10& undefined& 0.49420584& 0.06213729& 0.01844696& 0.00778967& 0.00399096& 0.00231080& 0.00145589
\\
\hline
\end{tabular}
\caption{Estimate of $\sup |\overline{y}_n(x)-y(x)|$ on the dyadic intervals of length 1/8 for Cauchy problem \eqref{example}. }\label{table-3}
\end{table}
}			
			
Although we wrote in the first column of Table \ref{table-3} that "undefined" value on the interval $[0,1/8[$, but $\sup |\overline{y}_n(x)-y(x)|$ can be calculated  on some subintervals of $[0,1/8[$. For example in case $n=4$ (see Figure \ref{figure-3}), $\sup |\overline{y}_n(x)-y(x)|$ is undefined only at the subinterval 	$[0,\frac{1}{16}[ $ and it can be determined at the subinterval $[\frac{1}{16},\frac{2}{16}[ $. For a big $n$ the expression $\sup |\overline{y}_n(x)-y(x)|$ is undefined only at a subinterval $[0,\frac{1}{2^n}[$ and outside it is  finite.

\section{Acknowledgement}	
The author thanks support of project  GINOP-2.2.1-15-2017-00055 and  possibility of applying  the worksheets improved for it. Moreover, the author thanks Toledo for valuable advises.

\thebibliography {99}
\bibitem{CH1}
C.F.~Chen and C.H.~Hsiao, \emph{A state-space approach to Walsh series solution
of linear systems}, Int. J. Systems Sci,  6 (9) (1975), 833--858.

\bibitem{CH2} C.F.~Chen and C.H.~Hsiao, \emph{Walsh series analysis in optimal control}, Int.
J. Control 21 (6) (1975), 881--897.

\bibitem{CH3}
C.F.~Chen and C.H.~Hsiao, \emph{A Walsh series direct method for solving
variational problems}, Journal of the Franklin Institute, 300 (4) (1975), 265--280.

\bibitem{CH4}
C.F.~Chen and C.H.~Hsiao, \emph{Design of piecewise constant gains for optimal
control via Walsh functions}, IEEE Transactions on Automatic Control,
20 (5) (1975), 596--603.

\bibitem{CH-V}
W.-L.~Chen and Y.-P.~Shih, \emph{Shift Walsh matrix and delay differential equations}, IEEE Transactions on Automatic Control,
23(6) (1978), 1023--1028.

\bibitem{Corr}
M.~Corrington, \emph{Solution of differential and integral equations with Walsh
functions}, IEEE Transactions on Circuit Theory, 20(5) (1973) 470--476.

\bibitem{GT}
G.~Gát, R.~Toledo, \emph{Numerical solution of linear differential equations
by Walsh polynomials approach}, Studia Sci. Math. Hungar. (2020) (to appear).

\bibitem{Gat-Toledo}
G.~G\'at and R.~Toledo,
\emph{Estimating the error of the numerical solution of linear differential equations with constant coefficients via Walsh polynomials},
Acta Math. Acad. Paedagog. Nyh\'azi. (N.S.), {31} (2015), 309--330.

\bibitem{GT2}
G.~G\'at and R.~Toledo, 
\emph{A numerical method for solving linear
differential equations via Walsh functions}, In Advances in Information
Science and Applications, volume 2, pages 334-339. Proceedings of the 18th
International Conference on Computers (part of CSCC 2014), Santorini	Island, Greece, July 17-21, (2014), 2014.

\bibitem{Fine}
N.J.~Fine,
\emph{On the Walsh functions},
Trans. Am. Math. Soc. {65} (1949), 372--414.

\bibitem{LLT}
D.S.~Lukomskii, S.F.~Lukomskii and P.A.~Terekhin, \emph{Solution of Cauchy problem for
equation first order via Haar functions}, Izv. Saratov Univ. (N.S.), Ser. Math. Mech.
Inform., 16 (2) (2016), 151--159.

\bibitem{L}
D.S.~Lukomskii, \emph{ Application of Haar system
for solving the Cauchy problem}, Mathematics, Mechanics 14, Saratov,
Saratov Univ. Press (2014),  47--50 (in Russian).

\bibitem{O}
T.~Ohta, \emph{Expansion of Walsh Functions in terms of shifted Rademacher
Functions and its applications to the signal processing and the radiation
of electromagnetic Walsh waves}, IEEE Transactions on Electromagnetic
Compatibility, EMC-18 (1976), 201--205.
	
\bibitem{R}	
G.P.~Rao, \emph{Piecewise constant orthogonal functions and their application to systems and control},  
Vol. 55 Springer, Cham, 1983.

\bibitem{SchippBook}
{ F.~Schipp, W.R.~Wade, P.~Simon, and J.~P\'al,}
\emph{ Walsh Series. An Introduction to Dyadic Harmonic Analysis,}
{ Adam Hilger (Bristol-New York 1990).}

\bibitem{SH}
Y.-P.~Shih and J.-Y.~Han, 
\emph{ Double Walsh series solution of first-order partial differential equations}, International Journal of Systems Science,
9(5) (1978), 569--578.

\bibitem{SM}
R.S.~Stankovic and D.M.~Miller. \emph{Using QMDD in numerical methods
	for solving linear differential equations via Walsh functions}, In 2015 IEEE
International Symposium on Multiple-Valued Logic,  182--188, 2015.

\end{document}